\newtheorem*{rep@theorem}{\rep@title}
\newenvironment{thmenv}[1]{
\def\rep@title{#1}
\begin{rep@theorem}
}{
\end{rep@theorem}
}
\newcommand{\TT}{\mathcal{T}}
\newcommand{\tendsto}[1]{\xrightarrow[#1]{}}
\renewcommand{\subset}{\subseteq}
\renewcommand{\supset}{\supseteq}
\newif\ifdraft
\newif\ifcolorcomments
\newcommand{\allowcomments}[4]{
\newcommand{#1}[1]{\ifdraft{\ifcolorcomments{\textcolor{#4}{##1 --#3}}\else{\textsl{##1 \ --#3}}\fi}\else{}\fi}
}
\allowcomments{\comvictor}{VB}{Victor}{blue}
\allowcomments{\comanish}{AG}{Anish}{blue}
\allowcomments{\comdavid}{DS}{David}{Green}
\allowcomments{\comsanju}{SV}{Sanju}{blue}
\font\tenmsy=msbm10 scaled 1200 \font\sevenmsy=msbm7 scaled 1200
\font\fivemsy=msbm5 scaled 1200
\newcommand{\R}{{\mathbb R}}
\newcommand{\Rp}{{\mathbb R}^{+}}
\newcommand{\Z}{{\mathbb Z}}
\newcommand{\N}{{\mathbb N}}
\newcommand{\Half}{{\mathbb H}}
\newcommand{\Q}{{\mathbb Q}}
\newcommand{\cH}{{\cal H}}
\newcommand{\p}{\psi}
\newcommand{\I}{{\rm I}}
\newcommand{\ep}{ \varepsilon }
\newtheorem{thDT}{Theorem DT \!\!\!}
\newtheorem{thDTh}{Theorem DT$^{\prime}$ \!\!\!}
\newtheorem{thKT}{Theorem KT \!\!\!}
\newtheorem{thJT}{Theorem JT \!\!\!}
\newtheorem{thkh}{Theorem (Khintchine) \!\!\!\!}
\newtheorem{thjarsch}{Theorem (Jarn\'{\i}k--Schmidt)\!\!}
\newtheorem{lemma}{Lemma}
\newtheorem{theorem}{Theorem}
\newtheorem{corollary}{Corollary}
\theoremstyle{definition}
\theoremstyle{remark}
\newtheorem{rem}{Remark}[section]
\newtheorem{question}{Question}[section]
\newcommand{\dist}{\mathrm{dist}\,}
\newcommand{\Sing}{\mathrm{Sing}}
\newcommand{\cM}{\mathcal{M}}
\newcommand{\cC}{\mathcal{C}}
\newcommand{\be}{\begin{eqnarray*}}
\newcommand{\ee}{\end{eqnarray*}}
\newcommand{\x}{\mathbf{x}}
\newcommand{\pbf}{\mathbf{p}}
\newcommand{\pen}{{\rm pen}}
\newcommand{\bad}{\mathbf{Bad}}
\begin{document}

\title{\huge Diophantine approximation in Kleinian groups: singular, extremal, and bad limit points}

\author{Victor Beresnevich\footnote{Research partly supported by EPSRC grant EP/J018260/1}
\\ {\small\sc (York) } \and Anish Ghosh\footnote{Research partly supported by a UGC grant} \\ {\small\sc (TIFR)} \and David Simmons\footnote{Research supported by EPSRC grant EP/J018260/1 } \\ {\small\sc (York)} \and Sanju Velani\footnote{Research partly supported by EPSRC grant EP/J018260/1 } \\ {\small\sc (York)} }

\date{ ~ \\ {\em Dedicated to Paddy Patterson } 
}

\maketitle

\begin{abstract}

The overall aim of this note is to initiate a ``manifold'' theory for metric Diophantine approximation on the limit sets of Kleinian groups. We investigate the notions of singular and extremal limit points within the geometrically finite Kleinian group framework. Also, we consider the natural analogue of Davenport's problem regarding badly approximable limit points in a given subset of the limit set. Beyond extremality, we discuss potential Khintchine-type statements for subsets of the limit set. These can be interpreted as the conjectural ``manifold'' strengthening of Sullivan's logarithmic law for geodesics.

\end{abstract}

\bigskip

%
%
%

\section{The general setup and main problems \label{gensetup}}

The classical results of Diophantine approximation,\hspace{2pt} in particular those from the one-dimensional theory, have natural counterparts and extensions in the hyperbolic space setting. In this setting, instead of approximating real numbers by rationals, one approximates the limit points of a fixed Kleinian group $G$ by points in the orbit (under the group) of a distinguished limit point $y$. Beardon $\&$ Maskit \cite{BeaMas} have shown that the geometry of the group is reflected in the approximation properties of points in the limit set.

Unless stated otherwise, in what follows $G$ denotes a nonelementary, geometrically finite Kleinian group acting on the unit ball model $(B^{d+1}, \rho)$ of $(d+1)$--dimensional hyperbolic space with metric $ \rho$ derived from the differential $ d \rho = 2 | d \x | /(1-|\x |^2 ) $. Thus, $G$ is a discrete subgroup of $ \mbox{M\"ob}(B^{d+1})$, the group of orientation-preserving M\"obius transformations of the unit ball $B^{d+1}$. By assumption, there is some finite-sided convex fundamental polyhedron for the action of $G$ on $B^{d+1}$. Since $G$ is nonelementary, the limit set $\Lambda$ of $G$ (the set of limit points in the unit sphere $S^d$ of any orbit of $G$ in $B^{d+1}$) is uncountable. The group $G$ is said to be \emph{of the first kind}\footnote{A geometrically finite group of the first kind is also called a \emph{lattice}.} if $\Lambda = S^d $ and \emph{of the second kind} otherwise. Let $ \delta $ denote the Hausdorff dimension of $\Lambda$. Trivially, if $G$ is of the first kind then we have $\delta := \dim \Lambda = d $. In general, it is well known that $\delta $ is equal to the exponent of convergence of the group \cite{PaddyLim, SullEnt}. For each element $g \in G$ we shall use the notation $L_g := |g^\prime(0)|^{-1}$, where $|g^\prime(0)| = 1-|g(0)|^2 $ is the (Euclidean) conformal dilation of $g$ at the origin. It can be verified that $L_g \leq e^{\rho(0,g(0))} \leq 4 L_g$. With this setup and notation in mind, we are in the position to state three fundamental results originating from Patterson's pioneering paper \cite{Paddyrs}. In short, they represent natural generalisations to the hyperbolic space setting of the classical theorems of Dirichlet, Khintchine, and Jarn\'{\i}k in the theory of Diophantine approximation. In view of this, they naturally motivate our Kleinian group investigation into singular, extremal, and badly approximable points in $\Lambda$ and its subsets.

\subsection{A Dirichlet-type theorem and singular subsets of $\Lambda$} \label{DTTsec}

The following two Dirichlet-type theorems were first established by Patterson \cite[Section 7: Theorems 1 \& 2]{Paddyrs} for finitely generated Fuchsian groups, i.e. Kleinian groups acting on the unit disc model of $2$--dimensional hyperbolic space. Recall that in this $d=1$ case, the class of finitely generated groups coincides with the class of geometrically finite groups.

\begin{thDT}
Let $G$ be a nonelementary, geometrically finite Kleinian group containing parabolic elements and let $P$ be a complete set of inequivalent parabolic fixed points of $G$. Then there is a constant $c> 0 $ with the following property: for each $\xi \in \Lambda$, $ N > 1$, there exist $p \in P$, $g \in G$ so that
\[
| \xi - g(p) | \le \frac{c}{\sqrt{L_g N}}
\qquad and \qquad L_g \le N \, .
\]
\end{thDT}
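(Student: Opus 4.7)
The plan is to use the horoball geometry of geometrically finite Kleinian groups together with a covering argument in the spirit of Dirichlet's pigeonhole. The central geometric fact is that, for each $p \in P$ and each coset $gG_p$, the shadow of the horoball $g(H_p)$ seen from the origin is a Euclidean disc centred at $g(p)$ whose radius is comparable to $L_g^{-1}$, and that these shadows cover $\Lambda$ efficiently at every scale.

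First, via a Margulis-lemma-type argument, I would choose horoballs $H_p \subset B^{d+1}$ at each $p \in P$ small enough so that the collection $\{g(H_p) : g \in G,\ p \in P\}$ consists of pairwise disjoint horoballs, and the ``thick part'' $T := C(\Lambda) \setminus \bigcup_{g,p} g(H_p)$ of the convex hull $C(\Lambda)$ is cocompact under $G$. For each coset $gG_p$, I choose the representative minimising $\rho(0, g(0))$. A direct computation in the upper half-space model, together with the identity $L_g \asymp e^{\rho(0, g(0))}$, then gives that the Euclidean radius of $g(H_p)$ is $\asymp e^{-\rho(0, g(H_p))} \asymp L_g^{-1}$, and so is the Euclidean diameter of its shadow $B_g \subset S^d$ from the origin (a disc centred at $g(p)$).

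Given $\xi \in \Lambda$ and $N > 1$, let $z$ be the point on the hyperbolic geodesic from $0$ to $\xi$ at distance $\log N$ from $0$. Since $\xi \in \Lambda$, the point $z$ lies within bounded hyperbolic distance of $C(\Lambda)$, and by cocompactness of the thick part modulo $G$ there exists a horoball $g(H_p)$ from our collection with $\rho(z, g(H_p)) \le D_0$ for a universal $D_0$. The triangle inequality then gives $\rho(0, g(H_p)) = \log N + O(1)$, whence $L_g \asymp N$. Moreover, the hyperbolic $D_0$-neighbourhood of $z$ projects radially onto a Euclidean ball on $S^d$ of radius $\lesssim e^{-\log N} = 1/N$ which contains both $\xi$ and $g(p)$, yielding $|\xi - g(p)| \lesssim 1/N$. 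Combining with $L_g \le c_1 N$ (reducible to $L_g \le N$ by applying the argument with $N$ replaced by $N/c_1$ and absorbing the loss into $c$), and using $\sqrt{L_g N} \le N$, we obtain $|\xi - g(p)| \le c/\sqrt{L_g N}$ as required.

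The main technical obstacle is the shadow/derivative estimate in the first step, specifically the identity $L_g \asymp e^{\rho(0, g(H_p))}$ for the minimising coset representative; this requires careful use of the conformal derivative of M\"obius transformations together with the fact that the minimiser places $g(0)$ at bounded hyperbolic distance from $g(H_p)$. A subsidiary subtlety is ensuring that $z$ lies within bounded hyperbolic distance of the convex hull $C(\Lambda)$; this is a standard fact for geodesics emanating from a fixed basepoint and ending at limit points, but needs care in the geometrically finite setting. Note that the form $c/\sqrt{L_g N}$ with the constraint $L_g \le N$ is self-adjusting: in the regime $L_g \asymp N$ produced by the covering argument it recovers the sharp Dirichlet rate $1/N$, while for coarser approximations (smaller $L_g$) it degrades gracefully.
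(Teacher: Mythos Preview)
The paper does not give its own proof of Theorem~DT; it attributes the result to Patterson for Fuchsian groups and to \cite{Strap} in general. So there is no in-paper argument to compare against, and I will focus on the correctness of your proposal. Your overall strategy --- run the geodesic from $0$ towards $\xi$ for time $\log N$, locate a nearby standard horoball, and read off the estimate from its shadow --- is the standard route and is correct when the point $z$ lands in (or near) the thick part. There, cocompactness does give an orbit point $g(0)$ within bounded distance of $z$, hence a horoball $g(H_{p_0})$ with top near $z$, and your conclusions $L_g\asymp N$ and $|\xi-g(p)|\lesssim 1/N$ follow.

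There is, however, a genuine gap when $z$ lies \emph{deep} inside a standard horoball $H_{g(p)}$. Your triangle-inequality step only yields $\rho(0,g(H_p))\le \log N + D_0$; the matching lower bound, and hence $L_g\asymp N$, is simply false in this regime (the top of the horoball is at distance $\log N - s'$ from $0$, where $s'$ is the time since entry, so $L_g\asymp Ne^{-s'}$). More seriously, your shadow claim --- that the $D_0$-neighbourhood of $z$ projects to a disc of radius $\asymp 1/N$ containing $g(p)$ --- fails: that disc contains $\xi$ and the radial projection of any point of $g(H_p)$ near $z$, but not the base point $g(p)$ itself, which sits at Euclidean distance up to $R_g\asymp 1/L_g\gg 1/N$ from $\xi$. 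Concretely, if $z$ lies past the apex of the excursion (entry time $\log L_g$, apex at $\log L_g+t$, exit at $\log L_g+2t$, with $\log L_g + t < \log N < \log L_g + 2t$), then $|\xi-g(p)|\asymp L_g^{-1}e^{-t}>1/N$. Since your final step only uses the trivial inequality $\sqrt{L_gN}\le N$, it never recovers from this.

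The missing ingredient is the horoball-penetration estimate (see e.g.\ \cite{Strap,SDS}): if the geodesic from $0$ to $\xi$ enters $H_{g(p)}$ and attains maximal depth $t$, then $|\xi-g(p)|\asymp L_g^{-1}e^{-t}$, and the total time spent inside is $2t+O(1)$. When $z$ at time $\log N$ is still inside, the exit time satisfies $\log L_g + 2t \ge \log N + O(1)$, whence $e^{-t}\lesssim (L_g/N)^{1/2}$ and therefore
\[
|\xi-g(p)| \ \lesssim \ L_g^{-1}(L_g/N)^{1/2} \ = \ (L_gN)^{-1/2}.
\]
This is precisely where the square root in the statement comes from; your argument, producing only $L_g\asymp N$, never sees it. A correct proof splits into the two cases ``$z$ in the thick part'' (your argument) and ``$z$ inside a horoball'' (the penetration estimate above).
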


\noindent As pointed out in \cite{Squad}, the $d=1$ proof of Patterson can be easily generalised to higher dimensions when the ranks\footnote{The stabiliser $ G_p =\{ g \in G \, : \, g(p) = p \} $ of a parabolic fixed point $p$ is an infinite group which contains a free abelian subgroup of finite index. The \emph{rank} of $p$ is defined to be the number $k\in [1,d]$ such that this subgroup is isomorphic to $\Z^k$.} of the parabolic fixed points are all maximal; i.e. when $\mathrm{rank}(p)= d $ for all $p \in P$. Without this rank assumption, the theorem is proved in \cite[Theorem 1]{Strap}. We now consider the case where the geometrically finite group $G$ has no parabolic elements; i.e. where $G$ is convex cocompact.

\begin{thDTh}
Let $G$ be a nonelementary, geometrically finite Kleinian group without parabolic elements and let $\{\eta,\eta'\}$ be the pair of fixed points of a hyperbolic element of $G$. Then there is a constant $c> 0 $ with the following property: for all $\xi \in \Lambda$, $ N > 1$, there exist $y \in \{\eta,\eta'\}$, $g \in G$ so that
\[
| \xi - g(y) | \le \frac{c}{ N} \qquad and \qquad L_g \le N \, .
\]
\end{thDTh}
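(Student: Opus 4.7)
The plan is to find, for each $\xi \in \Lambda$ and $N > 1$, an element $g \in G$ with $L_g \leq N$ such that $g(0)$ lies within bounded hyperbolic distance of the ray from $0$ to $\xi$ at depth $T := \log N + O(1)$, and then to invoke M\"obius distortion on $S^d$ to show that one of $g(\eta), g(\eta')$ is Euclidean-close to $\xi$.

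First, since $G$ is convex cocompact the convex core $C(\Lambda)/G$ is compact. This yields a constant $R_1 > 0$, depending only on $G$, such that for every $\xi \in \Lambda$ and every $T \geq 0$ there exists $g \in G$ with $\rho(g(0), \gamma_\xi(T)) \leq R_1$, where $\gamma_\xi$ denotes the hyperbolic ray from $0$ to $\xi$. (If $0 \notin C(\Lambda)$ one first enters $C(\Lambda)$ along $\gamma_\xi$ in bounded hyperbolic time, which is absorbed into $R_1$.) After adjusting constants, $L_g \leq N$. Writing $a := g(0)$ and $\hat a := a/|a| \in S^d$, the fact that $\gamma_\xi$ passes within $R_1$ of $a$ translates, by elementary hyperbolic-to-Euclidean comparison near the boundary, to $|\xi - \hat a| \lesssim L_g^{-1} \lesssim 1/N$.

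Next, decompose $g = T_a U$, where $U \in \mathrm{O}(d+1)$ is a rotation fixing $0$ and $T_a$ is the canonical M\"obius translation of the ball with $T_a(0) = a$, namely the hyperbolic translation of length $\rho(0,a)$ along the axis through $\pm \hat a$, with $\hat a$ attracting and $-\hat a$ repelling on $S^d$. The boundary derivative satisfies $|T_a'(x)| \lesssim L_g^{-1}$ uniformly for $x \in S^d$ bounded away from $-\hat a$ by any fixed positive Euclidean distance. Since $U$ is a Euclidean isometry of $S^d$, the points $U(\eta), U(\eta')$ are at separation $|\eta-\eta'| > 0$, so at least one of them lies at distance $\geq |\eta-\eta'|/2$ from $-\hat a$; let $y \in \{\eta,\eta'\}$ correspond to that choice. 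Integrating the derivative bound along a short arc on $S^d$ gives $|g(y) - \hat a| = |T_a(U(y)) - T_a(\hat a)| \lesssim L_g^{-1}$, and the triangle inequality yields $|\xi - g(y)| \leq |\xi - \hat a| + |\hat a - g(y)| \lesssim 1/N$.

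The main obstacle is the uniform conical approximation in the first step. Convex cocompactness is essential: in the presence of parabolic elements (the setting of Theorem DT) the orbit fails to be $R_0$-dense in $C(\Lambda)$ deep inside parabolic cusps, which is precisely why Theorem DT yields the weaker rate $c/\sqrt{L_g N}$. The decomposition $g = T_a U$ and the boundary distortion bound for $T_a$ are standard facts about M\"obius transformations of the ball, and the two-choice argument for $y$ neatly handles the single `bad' direction $-\hat a$ of $g$ on $S^d$, which is exactly the role played by the two fixed points of the hyperbolic element in the statement.
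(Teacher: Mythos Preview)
The paper does not supply its own proof of Theorem~DT$'$; it merely remarks that Patterson's Fuchsian argument from \cite{Paddyrs} generalises to higher dimensions. Your proposal is a clean dimension-free realisation of precisely that style of argument: use convex cocompactness to place an orbit point $g(0)$ in the $R_1$-shadow of $\xi$ at depth $T\approx\log N$, and then exploit the boundary contraction of $g$ away from its repelling direction to pull one of $\eta,\eta'$ close to $\xi$. The strategy is sound and the proof is essentially correct.

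Two points deserve tightening. First, the chain $|\xi-\hat a|\lesssim L_g^{-1}\lesssim 1/N$ tacitly uses the \emph{lower} bound $L_g\gtrsim N$; this follows from the reverse triangle inequality $\rho(0,g(0))\ge T-R_1$, but you should say so explicitly. Second, the phrase ``integrating the derivative bound along a short arc'' is misleading: the great-circle arc from $U(y)$ to $\hat a$ may have length up to $\pi$. What makes the estimate go through is that this arc stays at spherical distance at least (comparable to) $|\eta-\eta'|/2$ from $-\hat a$ throughout --- the meridian from any point to the north pole $\hat a$ only moves away from the south pole $-\hat a$ --- so the bound $|T_a'|\lesssim_{|\eta-\eta'|} L_g^{-1}$ holds along the entire arc, and it is the \emph{image} arc that has length $\lesssim L_g^{-1}$. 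With these two clarifications the argument is complete.
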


\noindent Patterson's $d=1$ proof of the above theorem easily generalises to higher dimensions.

\medskip

When interpreted on the upper half-plane $\Half^2$ and applied to the modular group $SL(2,\Z)$, it is easily verified that Theorem DT reduces to the  $d=1$ case of Dirichlet's Theorem. Recall that Dirichlet's Theorem states that for all $\x =(x_1, \ldots,x_d) \in \R^d$, $ N \in \N$, there exist $\pbf = (p_1, \ldots,p_d) \in \Z^d$, $ q \in \N $ so that
\[
\max_{1 \le i \le d} \Big | x_i - \frac{p_i}{q} \Big| \le \frac{1}{ q N^{\frac1d}} \qquad {\rm and } \qquad q \le N \, .
\]
Staying within the classical setup, a vector $\x \in \R^d$ is said to be \emph{singular} if for every $ \ep > 0 $ there exists $ N_0$ with the following property: for each $ N \ge N_0$, there exist $\pbf \in \Z^d$, $ q \in \N $ so that
\begin{equation}\label{singclassical}
\max_{1 \le i \le d} \Big | x_i - \frac{p_i}{q} \Big| < \frac{\ep}{ q N^{\frac1d}} \qquad {\rm and } \qquad q < N \, .
\end{equation}
In short, $ \x $ is singular if Dirichlet's Theorem can be ``improved'' by an arbitrarily small constant factor $\ep>0$. It is not difficult to see that the set $\Sing (d)$ of singular vectors contains every rational hyperplane in $\R^d$ and thus its Hausdorff dimension is between $d-1$ and $d$. In the case $d=1$, a nifty argument (which we shall utilise) due to Khintchine \cite{kh} shows that a real number is singular if and only if it is rational; that is, $\Sing (1) = \Q$. Davenport $\&$ Schmidt \cite{DavSch} in the seventies showed that $ \Sing (d) $ is a set of $d$-dimensional Lebesgue measure zero. Recently, Cheung $\&$ Chevallier \cite{ChCh}, building on the spectacular $d=2$ work of Cheung \cite{Ch}, have shown that $ \Sing (d) $ has Hausdorff dimension $ \frac{d^2}{d+1}$.

Motivated by the above classical ``singular'' theory we introduce the notion of singular limit points within the hyperbolic space setup. Let $G$ be a Kleinian group and let $Y$ be a complete set $P$ of inequivalent parabolic fixed points of $G$ if the group has parabolic elements; otherwise let $Y$ be the pair $\{\eta,\eta'\}$ of fixed points of a hyperbolic element of $G$. A point $\xi \in \Lambda $ is said to be \emph{singular} if for every $ \ep > 0 $ there exists $ N_0$ with the following property: for each $ N \ge N_0$, there exist $y \in Y$, $g \in G$ so that
\begin{equation}\label{singhyper}
| \xi - g(y) | \ < \ \left\{
\begin{array}{ll}
\frac{\ep}{\sqrt{L_g N}} \quad & {\rm if}
\;\;\; Y = P \; \\ [4ex]
\frac{\ep}{ N } \quad & {\rm if} \;\;\; Y = \{\eta,\eta'\} \;
\end{array}
\right.
\qquad {\rm and } \qquad
L_g < N \ . \end{equation}

\noindent Our first result shows that the hyperbolic ``singular'' theory is not as rich as the higher-dimensional classical theory in $\R^d$. Indeed, irrespective of the dimension the hyperbolic space, it is in line with the one-dimensional classical theory.

\begin{theorem} \label{singthm}
Let $G$ be a nonelementary, geometrically finite Kleinian group, and let $Y$ be as above. Then a point $ \xi \in \Lambda $ is singular if and only if $\xi \in G(Y) := \{ g(y) : g \in G, y \in Y \}$.
\end{theorem}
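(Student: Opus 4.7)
The plan is to adapt Khintchine's classical argument that $\Sing(1) = \Q$ to the hyperbolic setting. The implication $\xi \in G(Y) \Rightarrow \xi$ singular is trivial: if $\xi = g_0(y_0)$ with $g_0 \in G,\ y_0 \in Y$, then for any $\ep > 0$ and any $N > L_{g_0}$ the choice $(g,y) := (g_0,y_0)$ satisfies (\ref{singhyper}) vacuously, since its left-hand side is zero and $L_g < N$. For the converse, I assume $\xi \in \Lambda$ is singular, fix a small $\ep > 0$ (to be pinned down), and let $(g_N,y_N) \in G \times Y$ be any pair witnessing (\ref{singhyper}) at scale $N \geq N_0(\ep)$. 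The aim is to show that, for $\ep$ small enough, the orbit point $g_N(y_N)$ depends neither on $N$ nor on the choice of witness; once this stabilisation is in hand, letting $N \to \infty$ in (\ref{singhyper}) immediately forces $\xi = g_N(y_N) \in G(Y)$.

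The geometric heart of the argument is a quasi-independence (or \emph{gap}) estimate for the set $G(Y)$: there should exist $c_0 > 0$ such that for all $g_1,g_2 \in G$ and $y_1,y_2 \in Y$ with $g_1(y_1) \neq g_2(y_2)$,
\[
| g_1(y_1) - g_2(y_2) | \ \geq \ \frac{c_0}{\sqrt{L_{g_1} L_{g_2}}}
\]
in the parabolic case, with the analogous bound (matching the $c/N$ rate of Theorem DT$'$) in the convex cocompact case. This is the direct hyperbolic counterpart of the Liouville separation $|p_1/q_1 - p_2/q_2| \geq 1/(q_1 q_2)$ on which Khintchine's proof rests. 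In the parabolic setting it is a standard consequence of the Sullivan--Stratmann disjointness of horoball shadows, requiring extra care when a parabolic fixed point has non-maximal rank, while in the convex cocompact setting it follows from the discreteness of $G$ applied to the translation axes of hyperbolic conjugates.

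Granted this gap estimate the conclusion is routine: I would compare approximations at two scales $N$ and $N' = 2N$. If $g_N(y_N) \neq g_{N'}(y_{N'})$, the triangle inequality combined with (\ref{singhyper}) at both scales and the gap estimate gives, after cancellation using $L_{g_N} < N$ and $L_{g_{N'}} < 2N$, an inequality of the form $c_0 \leq C \ep$ for an absolute constant $C$. Choosing $\ep < c_0/C$ forces the two approximations to produce the same orbit point, and iterating this statement along a dyadic sequence of scales shows that $g_N(y_N)$ is indeed constant in $N$ for all large $N$ (and independent of the choice of approximating pair). The common value lies in $G(Y)$ and therefore equals $\xi$, completing the proof. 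The main obstacle is the gap estimate in the uniform form above, accommodating simultaneously parabolic fixed points of all ranks and the convex cocompact case; both ingredients are standard in the literature on Kleinian groups, but packaging them into a single clean bound is the principal technical input.
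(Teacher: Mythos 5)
Your proposal is correct and follows essentially the same path as the paper's proof: the ``gap estimate'' you invoke is precisely the Disjointness Lemma (Lemma~\ref{disjointness} in the parabolic case, Lemma~\ref{disjointnessH} in the convex cocompact case), and the paper likewise compares witnesses at scales $N$ and $2N$, applies the triangle inequality together with the separation bound, and concludes by choosing $\ep$ smaller than a fixed multiple of the disjointness constant. The only cosmetic difference is that the paper frames the conclusion as a dichotomy (either $g_N(p_N)=g_{2N}(p_{2N})$ for all $N\ge N_0$, forcing $\xi\in G(Y)$, or some pair differs, giving the contradiction), whereas you phrase it as stabilisation along a dyadic sequence; the two are logically equivalent.
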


\vspace{1ex}

\begin{rem}
In the case where $G$ is convex cocompact, the set of singular limit points is dependent on the choice of $Y$; i.e. on the chosen pair $\{\eta,\eta'\}$ of hyperbolic fixed points of $G$. If $G$ has parabolic elements, the set of singular limit points is precisely the set of parabolic fixed points of $G$. Dynamically, the set corresponds to geodesics on the associated hyperbolic manifold $ \cH = B^{d+1} /G $  that travel straight into the ``throat'' of a cuspidal end -- see \S\ref{dreamGs}.
\end{rem}

\vspace{1ex}

\begin{rem}
The parabolic fixed points of the modular group are the rationals together with the point at infinity. Thus, Theorem \ref{singthm} when interpreted on $\Half^2$ and applied to $SL(2,\Z)$ precisely coincides with the $d=1$ classical results.
\end{rem}

\vspace{1ex}

\begin{rem}
The proof of Theorem \ref{singthm} is pretty straightforward and relies on the disjointness property of horoballs based at the parabolic or hyperbolic fixed points associated with the set $G(Y)$ -- see \S\ref{prrofthm1} for the details.
\end{rem}


\subsection{A Khintchine-type theorem and extremal subsets of $\Lambda$} \label{KTTsec}
Let $G$ be a nonelementary, geometrically finite Kleinian group $G$ and let $y$ be a parabolic fixed point of $G$ if the group has parabolic elements and a hyperbolic fixed point otherwise. The Dirichlet-type theorems of \S\ref{DTTsec} together with natural ``decoupling'' results (see for example \cite[Proposition 2.3]{Strap} and \cite[Proposition 2]{Squad}) imply the following statement for any nonelementary, geometrically finite Kleinian group $G$: {\em for each point $\xi \in \Lambda$ which is not a parabolic fixed point there exist infinitely many $g \in G$ such that }
\begin{equation} \label{KGDTio}
| \xi - g(y) | < \frac{c}{L_g} \, .
\end{equation}
Here, $c$ is a positive group constant. It is easy to see that if $G$ has only one equivalence class of parabolic fixed points then we can take $\xi$ to be any limit point. In any case, the statement describes to what extent any (non-parabolic) limit point $\xi$ may be approximated by the orbit of the distinguished point $y$; namely that every non-parabolic limit point can be approximated by orbit points $g(y)$ with ``rate'' of approximation given by $c/L_g$ -- the right-hand side of inequality \eqref{KGDTio} determines the ``rate'' or ``error'' of approximation. It is natural to broaden the discussion to include general approximating functions. More precisely,
let $\psi : \Rp \to \Rp:= [0,\infty) $ be a decreasing function and let
\[
W_{y}(\psi)= W_{y}(\psi,G)
:=\big\{ \xi \in \Lambda: | \xi - g(y) | < \psi(L_g) \, \mbox{ \ for \ i.m.
$g \in G$}\big\} .
\]
\noindent This is the set of points in the limit set which are ``close'' to infinitely many (``i.m.'') images of the ``distinguished'' point $y$. The degree of ``closeness'' is of course governed by the approximating function $\psi$. As above, $y$ is taken to be a parabolic fixed point of $G$ if the group has parabolic elements and a hyperbolic fixed point of $G$ otherwise. A natural problem is to determine the ``size'' of the set $ W_{y}(\psi) $ in terms of the Patterson measure $m$ -- a nonatomic, $\delta$-conformal probability measure $m$ supported on $\Lambda$. For groups of the first kind, since $\delta:= \dim \Lambda = d $, the Patterson measure is simply normalised $d$-dimensional Lebesgue measure on the unit sphere $ S^d$. The following Khintchine-type theorem was first established by Patterson \cite[Section 9]{Paddyrs} for finitely generated Fuchsian groups of the first kind. For convenience, let
\[
w(y) := \left\{
\begin{array}{ll}
2\delta-{\rm rank}(y) & \mbox{ if \;\;\; $y$ is parabolic},\\ [2ex]
\delta & \mbox{ if \;\;\; $y$ is hyperbolic}.
\end{array}
\right.
\]
\vspace{3ex}

\begin{thKT}
Let $G$ be a nonelementary, geometrically finite Kleinian group and let $y$ be a parabolic fixed point of $G$, if there are any, and a hyperbolic fixed point otherwise. Then

\[
m( W_{y}(\psi) ) = \left\{
\begin{array}{ll}
0 & {\rm \ if} \;\;\;
\sum_{r=1}^\infty \; \psi\left(r\right)^{w(y)}
\;\; r^{w(y)- 1} <\infty\; ,\\ [3ex]
1 & {\rm \ if} \;\;\; \sum_{r=1}^\infty \;
\psi\left(r\right)^{w(y)} \;\; r^{w(y)- 1} =\infty \; .
\end{array}
\right.
\]
\label{GF1}
\end{thKT}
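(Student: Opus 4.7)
The plan hinges on two inputs: the Stratmann--Velani global measure formula controlling the $m$-measure of balls in $\Lambda$ in terms of the positions of the parabolic/hyperbolic fixed points of $G$, and orbital counting for $G(y)$, which asserts that the coset Poincar\'e series $\sum_{g \in G/G_y} L_g^{-s}$ has critical exponent exactly $w(y)$. Together, these are designed to match the exponent $w(y)$ that appears in the sum.

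\emph{Convergence half.} For $g \in G/G_y$, set $E_g := \{\xi \in \Lambda : |\xi - g(y)| < \psi(L_g)\}$, so $W_y(\psi) = \limsup_{g} E_g$. By the easy Borel--Cantelli lemma, it suffices to show $\sum_{g \in G/G_y} m(E_g) < \infty$. Combining the $\delta$-conformality of $m$ with the global measure formula, one gets $m(E_g) \asymp \psi(L_g)^{w(y)}$ on the relevant scale. Decomposing the orbit dyadically by $L_g \in [2^n, 2^{n+1})$, the contribution of the $n$-th shell is comparable to $2^{n w(y)} \psi(2^n)^{w(y)}$, and summing over $n$ matches the dyadic form of $\sum_r \psi(r)^{w(y)} r^{w(y)-1}$, yielding the convergence direction.

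\emph{Divergence half.} The plan is to prove quasi-independence on average of the events $E_g$ and then invoke a divergence Borel--Cantelli lemma (Chung--Erd\H{o}s or Paley--Zygmund). Concretely, establish an inequality of the form
\[
\sum_{\substack{g, h \in G/G_y \\ 2^n \leq L_g, L_h < 2^{n+1}}} m(E_g \cap E_h) \;\leq\; C \Big(\! \sum_{\substack{g \in G/G_y \\ 2^n \leq L_g < 2^{n+1}}} m(E_g)\! \Big)^{\!2},
\]
whose geometric input is a separation estimate for distinct orbit points inside a dyadic shell, in the spirit of the horoball disjointness used in the proof of Theorem \ref{singthm}. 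An alternative, more dynamical, route reinterprets $\xi \in W_y(\psi)$ as a shrinking-target condition for the geodesic on $\mathcal{H} = B^{d+1}/G$ joining $0$ to $\xi$: it says that this geodesic makes infinitely many prescribed-depth excursions into the cusp above $y$ (when $y$ is parabolic) or near the axis of $y$ (when $y$ is hyperbolic). The divergence then follows from a Sullivan-type logarithmic law / shrinking-target theorem for the geodesic flow relative to the Bowen--Margulis--Sullivan measure, transferred back to $\Lambda$ via the Patterson density, and ergodicity of the flow upgrades positive measure to full measure.

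\emph{Main obstacle.} The heart of the matter is the divergence half when $y$ is a parabolic fixed point of low rank $k < \delta$. In that regime, the balls $E_g$ acquire anomalous cusp corrections that must be matched precisely against the enhanced coset-count exponent $2\delta - k$; keeping the quasi-independence constants (or, equivalently, the shrinking-target constants) uniform across all dyadic scales, with the parabolic corrections tracked coherently, is where most of the real work lies.
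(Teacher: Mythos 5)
Note first that the paper does not prove Theorem~KT: it is a cited result, attributed to Patterson \cite{Paddyrs}, Stratmann \cite{StrapCoCompact}, Stratmann--Velani \cite{Strap} and Beresnevich--Dickinson--Velani \cite{memoirs}. Your architecture --- Borel--Cantelli for convergence, quasi-independence on average (equivalently a shrinking-target/ubiquity argument) for divergence, fed by the Stratmann--Velani global measure formula and by orbital counting --- is indeed how that literature proceeds, so the shape of the proposal is right.

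The gap is in the two quantitative inputs, which are both stated incorrectly when $y$ is parabolic of rank $k\neq\delta$, and the fact that the final series comes out right is an artefact of the two errors cancelling. The global measure formula does \emph{not} give $m(E_g)\asymp\psi(L_g)^{w(y)}$: for $g\in\mathcal{T}_p$ and $\psi(L_g)\ll L_g^{-1}$ the correct estimate carries an extra factor recording the depth of the cusp excursion, namely
\[
m\bigl(B(g(p),\psi(L_g))\bigr)\ \asymp\ \psi(L_g)^{2\delta-k}\,L_g^{\,\delta-k}.
\]
Likewise the coset Poincar\'e series $\sum_{g\in\mathcal{T}_p}L_g^{-s}$ has critical exponent $\delta$, \emph{not} $w(y)=2\delta-k$: by Lemma~\ref{disjointness} the balls $B(g(p),c_2/L_g)$ with $L_g\asymp2^n$ are pairwise disjoint, each has $m$-measure $\asymp 2^{-n\delta}$ (there is no cusp correction at the horoball scale $r\asymp R_g\asymp L_g^{-1}$), and since $m$ is a probability measure this forces $\#\{g\in\mathcal{T}_p:L_g\asymp2^n\}\ll 2^{n\delta}$. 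Indeed for a lattice with a cusp of rank $k<d-1$ your claimed count $2^{n(2\delta-k)}=2^{n(2d-k)}$ would exceed even the trivial packing bound $2^{n(d+1)}$ for disjoint horoballs of radius $\asymp 2^{-n}$ in $B^{d+1}$. The cancellation $2^{n\delta}\cdot 2^{n(\delta-k)}\,\psi(2^n)^{2\delta-k}=2^{n(2\delta-k)}\,\psi(2^n)^{2\delta-k}$ explains why your heuristic total agrees with the theorem, but neither ingredient is correct as stated, so neither half of the argument is actually justified. A secondary problem in the divergence half: the quasi-independence inequality you display is scoped to a single dyadic shell, where disjointness makes the left-hand side equal $\sum_g m(E_g)$, and the bound then fails whenever that shell sum is small; the substance of quasi-independence on average lies in the cross-scale pairs, and the inequality must be formulated over all shells up to $N$ simultaneously.
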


\vspace{1ex}

\begin{rem} \label{like} In terms of this note, there are two special cases of the above theorem that are of particular interest to us.
\begin{itemize}
\item[(i)] For $\ep \ge 0 $, let $ \psi_{\ep} : r \to r^{-1} (\log r)^{- \frac{ 1+ \ep}{w(y)}} $. Then it follows that
\[
m( W_{y}(\psi_{\ep}) ) = \left\{
\begin{array}{ll}
0 & {\rm \ if} \;\;\;
\ep > 0 \; ,\\ [1ex]
1 & {\rm \ if} \;\;\; \ep = 0 \; .
\end{array}
\right.
\]
This statement has a well-known dynamical interpretation in terms of the ``rate'' of excursions by geodesics into a cuspidal end of the associated hyperbolic manifold $ \cH =  B^{d+1} /G $; namely Sullivan's logarithm law for geodesics \cite{paris, Strap, SDS}. We shall return to this in \S\ref{dreamGs}.

\item[(ii)] For $ \tau \ge 1 $, consider the function $ \psi : r \mapsto r^{-\tau} $ and write $ W_{y}(\tau) $ for $ W_{y}(\psi)$. Then it follows that
\[
m( W_{y}(\tau) ) = 0 \qquad {\rm \ if} \;\;\; \tau > 1 \; .
\
\]
\label{GF2}
The fact that $ m( W_{y}(\tau) ) = 1$ for $\tau = 1$ can be easily deduced from the statement associated with inequality \eqref{KGDTio}  and the fact that $  m(W_{y}(c\psi)) =  m(W_{y}(\psi))  $ for any constant  $c> 0$ \cite[Lemma 4.6]{Strap}  - we do not need the full power of the divergence case of Theorem~KT.
\end{itemize}
\end{rem}

\noindent Without assuming that $G$ is of the first kind, Theorem KT is essentially established in \cite{StrapCoCompact} if $y$ is a hyperbolic fixed point of $G$ and in \cite{Strap} if $y$ is a parabolic fixed point of $G$. We say essentially, since in both \cite{StrapCoCompact} and \cite{Strap} an extra regularity condition on the approximating function $\psi$ is assumed. The theorem as stated above, without any regularity condition on $\psi$ beyond monotonicity, is established in \cite[Section 10.3: Theorems 5 $\&$ 9]{memoirs} and is the perfect Kleinian group analogue of Khintchine's Theorem in the classical theory of metric Diophantine approximation. Indeed, when interpreted on the upper half-plane $\Half^2$ and applied to the modular group $SL(2,\Z)$, it is easily verified that Theorem KT reduces to the $d=1$ case of Khintchine's Theorem. In what follows, $W(d,\psi)$ denotes the set of simultaneously $\psi$-well approximable points in the unit cube $\I^d:= [0,1]^d$; that is,
\[
W(d,\psi) := \left\{ \x\in \I^d: \max_{1 \le i \le d} \Big | x_i - \frac{p_i}{q} \Big| \le \psi(q) \, \mbox{ \ for \ i.m.
$(\pbf,q) \in \Z^d \times \N$} \right\} \, .
\]
\vspace{1ex}

\begin{thkh} Let $m_d$ be $d$-dimensional Lebesgue measure. Then
\[
m_{d} (W(d,\psi)) =\left\{
\begin{array}{ll}
0 & {\rm if} \;\;\; \sum_{r=1}^{\infty} \; (\psi(r) \, r )^d <\infty\;
,\\ [3ex]
1 & {\rm if} \;\;\; \sum_{r=1}^{\infty} \; (\psi(r) \, r )^d =\infty \; .
\end{array}\right.
\]
\end{thkh}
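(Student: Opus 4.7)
The argument splits naturally into the convergence and divergence directions. For the convergence direction, define
\[
E_q := \bigcup_{\substack{\pbf \in \Z^d \\ 0 \le p_i \le q}} \Big\{ \x \in \I^d : \max_{1 \le i \le d} \Big|x_i - \tfrac{p_i}{q}\Big| \le \psi(q) \Big\} ,
\]
so that $W(d,\psi) = \limsup_{q \to \infty} E_q$. A direct volume count gives $m_d(E_q) \le (q+1)^d (2\psi(q))^d \ll (q\psi(q))^d$, and convergence of $\sum_{r \ge 1} (\psi(r) r)^d$ combined with the first Borel--Cantelli lemma immediately yields $m_{d}(W(d,\psi)) = 0$.

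For the divergence direction, the first step is to establish a zero--one law. If $\x \in W(d,\psi)$ and $\br = \pbf'/q_0 \in \Q^d$, then any approximation $\max_i |x_i - p_i/q| \le \psi(q)$ with $q_0 \mid q$ yields an approximation $\max_i |(x_i + r_i) - (p_i + (q/q_0)p'_i)/q| \le \psi(q)$ of $\x + \br$. Since $\psi$ is monotonic, divergence of $\sum (\psi(r) r)^d$ implies divergence along the subsequence $q \in q_0 \N$ (by a standard Cauchy condensation type comparison), so infinitely many such $q$ arise and $\x + \br \in W(d,\psi)$. Because the translation action of $\Q^d/\Z^d$ on the torus is ergodic, the Lebesgue zero--one law forces $m_d(W(d,\psi)) \in \{0, 1\}$, and it suffices to show that this measure is positive.

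To produce positive measure, I would invoke a second-moment argument (the Davenport--Erd\H{o}s--LeVeque lemma, or equivalently Kochen--Stone): provided that
\[
\sum_{q_1, q_2 \le Q} m_d(E_{q_1} \cap E_{q_2}) \ll \Big( \sum_{q \le Q} m_d(E_q) \Big)^2 \qquad \text{as } Q \to \infty ,
\]
one concludes $m_d(\limsup_q E_q) > 0$. The coprime case of this quasi-independence estimate is routine: when $\gcd(q_1,q_2)=1$, the Chinese Remainder Theorem arranges the centres $\pbf_1/q_1,\pbf_2/q_2$ as a uniformly distributed subset of the lattice of scale $1/(q_1q_2)$, from which $m_d(E_{q_1} \cap E_{q_2}) \asymp m_d(E_{q_1}) m_d(E_{q_2})$ follows up to harmless boundary effects.

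\textbf{Main obstacle.} The technical heart is controlling pairs with large common factor, in particular the diagonal $q_1 \mid q_2$ where an approximation at level $q_1$ is automatically one at level $q_2$ (so no genuine independence holds). Monotonicity of $\psi$ is essential here: one groups denominators dyadically and uses $\psi(q_2) \le \psi(q_1)$ together with the convergence part (applied to the series $\sum (q\psi(q))^d \cdot 1_{q_1 \mid q}$) to absorb these degenerate pairs into the error. Packaging this bookkeeping, summing over all common-factor classes, and verifying that the totals remain $\ll (\sum m_d(E_q))^2$ is the step that absorbs the bulk of the technical work and that distinguishes the general $d \ge 1$ argument from the slicker $d=1$ proof via continued fractions.
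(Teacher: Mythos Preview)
The paper does not prove this statement: Khintchine's theorem is quoted as a classical result (attributed to Khintchine in the theorem heading) purely to motivate the Kleinian-group analogue, Theorem~KT. There is therefore nothing in the paper to compare your proposal against.

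As for the proposal itself, the convergence half is fine, and the overall divergence strategy (quasi-independence of the sets $E_q$ plus a zero--one law) is the standard route. But your zero--one argument has a gap. You argue that if $\x\in W(d,\psi)$ then $\x+\br\in W(d,\psi)$ for every rational $\br=\pbf'/q_0$, by observing that the series $\sum_{q\in q_0\N}(q\psi(q))^d$ still diverges. Divergence of that subseries does \emph{not} imply that the particular point $\x$ has infinitely many good approximations with $q_0\mid q$; it only says that the corresponding $\limsup$ set along that subsequence has full measure (which is exactly what you are trying to prove). So the claimed $\Q^d$-invariance of $W(d,\psi)$ as a set is not established. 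The usual fixes are either to invoke Gallagher's zero--one law directly, or to bypass the zero--one step altogether and push the second-moment computation to yield full measure on every sub-cube (which, combined with the Lebesgue density theorem, gives $m_d(W(d,\psi))=1$). Your description of the second-moment bookkeeping for the gcd-classes is accurate in spirit; that part is indeed where the real work lies.
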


\vspace{2ex}

Staying within the classical setup, we briefly turn to the manifold theory. In short, Diophantine approximation on manifolds is the study of the Diophantine properties of points in $\R^d$ whose coordinates are constrained by
(differentiable) functional relations, or equivalently points which are known to be members of a submanifold $\cM \subseteq \R^d$. Actually, there is no harm in restricting our attention to submanifolds $\cM \subseteq \I^d $, and the specific aspect of the manifold theory that we will be concerned with is that of describing the measure of $ \cM \cap W(d,\psi) $ (with respect to the Lebesgue measure on $\cM$). The fact that the points of interest $\x \in \I^d$ are constrained by functional relations, or in other words that they are required to be members of a fixed manifold $\cM$, introduces major difficulties in attempting to analyse the measure-theoretic structure of $ \cM \cap W(d,\psi) $. This is true even for seemingly simple curves such as the unit circle or the parabola.

The goal is to obtain a Khintchine-type theorem that describes the Lebesgue measure of the set of simultaneously $\psi$--approximable points lying on any given manifold. Notice that if the dimension $k$ of the manifold ${\cal M}$ is strictly less than $d$ then $ m_d(\cM \cap W(d,\psi)) = 0 $ irrespective of the approximating function $\psi$. Thus, in attempting to develop a general Lebesgue theory for $ \cM \cap W(d,\psi)$ it is natural to use the normalised $k$-dimensional Lebesgue measure on $\cM$. This will be denoted by $ | \ \cdot \ |_{\cM} $. In order to make any reasonable progress with developing a general theory, we insist that the manifolds $\cM$ under consideration are {\em nondegenerate manifolds}. Essentially, these are smooth submanifolds of $\R^d$ which are sufficiently curved so as to deviate from any hyperplane. For a formal definition and indeed a more in-depth overview of the manifold theory, we refer the reader to \cite[Section 6]{durham} and the references within. In terms of examples, any connected analytic manifold not contained in any hyperplane of $\R^d$ is nondegenerate. Also, a planar curve ${\cal C}$ is nondegenerate if the set of points on ${\cal C}$ at which the curvature vanishes is a set of one-dimensional Lebesgue measure zero.

The claim is that the notion of nondegeneracy is the right criterion for a manifold $\cM$ to be ``sufficiently'' curved in order to obtain a Khintchine-type theorem (both convergence and divergence cases) for $\cM \cap W(d,\psi)$.

\begin{thmenv}{Conjecture 1 (The Dream Theorem)}
Let $\cM$ be a nondegenerate submanifold of $\R^d$. Then
\begin{equation}\label{vb5}
| \cM\cap W(d,\psi) |_{\cM}
=\left\{
\begin{array}{ll}
0 & \textup{if} \;\;\; \sum_{r=1}^{\infty} \; (\psi(r) \, r )^d <\infty\;
,\\ [4ex]
1 & \textup{if} \;\;\; \sum_{r=1}^{\infty} \; (\psi(r) \, r )^d =\infty \; .
\end{array}\right. 
\end{equation}
\end{thmenv}

\noindent We now describe various ``general'' contributions towards the Dream Theorem. Let us write $ W(d,\tau) $ for $ W(d,\psi)$ when considering functions $\psi$ of the shape $ \psi( r ) = r^{-\tau} $.

\begin{itemize}

\item { \em Extremal manifolds.}\index{Extremal manifolds} A submanifold $\cM$ of $\R^d$ is called {\em extremal} if
\[
\left| \cM\cap W(d, \tau) \right|_{\cM} =0
\qquad \forall \;\; \tau > \textstyle{\frac{d+1}{d}} \,.
\]
Note that Dirichlet's theorem implies that $ W(d, \textstyle{\frac{d+1}{d}} ) = \I^d $ and so it trivially follows that $ \cM\cap W(d, \textstyle{\frac{d+1}{d}} ) = \cM$. In their pioneering work \cite{KM98}, Kleinbock $\& $ Margulis proved that any nondegenerate submanifold $\cM$ of $\R^d$ is extremal. It is easy to see that this implies the convergence case of the Dream Theorem for functions $ \psi : r \mapsto r^{-\tau} $. It is worth mentioning that Kleinbock $\& $ Margulis established a stronger (multiplicative) form of extremality that settled the Baker--Sprind\v zuk Conjecture from the eighties.

\item {\em Planar curves.} The Dream Theorem is true when $d=2$; that is, when $\cM$ is a nondegenerate planar curve. The convergence case of \eqref{vb5} for planar curves was established in \cite{Vaughan-Velani-2007} and subsequently strengthened in \cite{BZ}. The divergence case of \eqref{vb5} for planar curves was established in \cite{Beresnevich-Dickinson-Velani-07:MR2373145}.

\item {\em Beyond planar curves.} The divergence case of the Dream Theorem is true for analytic nondegenerate submanifolds of $\R^d$ \cite{B12}. In current work \cite{BVVZdiv} being written up,  the divergence case of \eqref{vb5} will be shown to be true for nondegenerate curves, as well as manifolds that can be ``fibred'' into such curves \cite{BVVZdiv}. The latter includes $C^{\infty}$ nondegenerate submanifolds of $\R^d$ which are not necessarily analytic. The convergence case of the Dream Theorem is true for a large class of nondegenerate submanifolds of $\R^d$ with dimension $k$ satisfying $k(k+3)/2 > d$, and this class includes ``most'' manifolds when $k(k+1)/2 \geq d$ \cite{Simmons}.  The work in  \cite{Simmons} builds upon  the approach taken in \cite{BVVZcon} in which the convergence case is shown to be true for a large subclass of nondegenerate submanifolds with  $k>(d+1)/2$.

\end{itemize}

\noindent The upshot of the above is that the Dream Theorem actually holds for a fairly generic class of nondegenerate submanifolds $\cM$ of $\R^d$ apart from the case of convergence when $d \ge 3 $ and $k(k+1)/2 < d$.

\vspace*{1ex}

\begin{rem} \label{friendlydecay}
In \cite{Kleinbock-Lindenstrauss-Weiss-04:MR2134453}, Kleinbock, Lindenstrauss, $\&$ Weiss made an emphatic generalisation of the ``extremal'' work of \cite{KM98} to subsets $K$ of $\R^d$ supporting so-called {\em friendly } measures. Within the context of this paper, it suffices to say that friendly measures form a large and natural class of measures on $\R^d$ which includes Riemannian measures supported on nondegenerate manifolds, fractal measures supported on self-similar sets satisfying the open set condition (e.g. regular Cantor sets, the Koch snowflake, the Sierpi\'nski gasket), and conformal (Patterson) measures supported on the limit sets of geometrically finite Kleinian groups, as long as they are not contained in any hyperplane. These facts are proven in \cite[Theorem 2.3]{Kleinbock-Lindenstrauss-Weiss-04:MR2134453} and \cite[Theorem 1.9]{DFSU_GE2}, respectively. Recently, the concept of friendly measures has been generalised even further to the notion of \emph{quasi-decaying} measures, see \cite{DFSU_GE1,DFSU_GE2}.
\end{rem}

In view of the recent progress within the classical manifold theory, it would be highly desirable to obtain an analogous theory within the hyperbolic space setup. With this in mind as the ultimate goal, let $K$ be a subset of the limit set $\Lambda$ which supports a nonatomic probability measure $\mu$. Then $K$ will play the role of the manifold and $\mu$ the role of the Lebesgue measure on the manifold. In this note, we develop a reasonably complete extremal theory for Kleinian groups. In view of \eqref{KGDTio}, it is natural to say that a subset $K \subseteq \Lambda$ is $\mu$-extremal if
\[
\mu( K \cap W_{y}(\tau) ) = 0 \qquad {\forall} \;\; \tau > 1 \; .
\label{GF3}
\]
Note that $\Lambda$ is $m$-extremal where $m$ is the Patterson measure --- see Remark \ref{friendlydecay}. To have any hope of developing a general extremal theory for the subsets $K$ we impose the following ``decaying'' condition on the measure $\mu$. Given $\alpha > 0$, the measure $\mu$ supported on $K$ is said to be {\em weakly absolutely $\alpha$-decaying}  if there exist strictly positive constants $ C, r_0 $ such that for all
$\ep > 0$ we have
\[
\mu\big(B(x,\ep r) \big) \ \leq \ C \, \ep^{\alpha} \mu\big(B(x,r)\big) \hspace{7mm} \forall \ x \in K
\hspace{5mm} \forall \ r < r_0 \ .
\]
For sets supporting such measures, we are able to prove the following result.

\begin{theorem} \label{mainext}
Let $G$ be a nonelementary, geometrically finite Kleinian group and let $y$ be a parabolic fixed point of $G$, if there are any, and a hyperbolic fixed point otherwise. Fix $\alpha > 0$, and let $K$ be a compact subset of $\Lambda$ equipped with a weakly absolutely $\alpha$-decaying measure $\mu$. Then
\begin{equation} \label{ineqmain2}
\mu( K \cap W_{y}(\psi) ) = 0\hspace{6mm} {\rm if \ } \hspace{6mm} \sum_{r=1}^{\infty}
r^{\alpha -1 } \p(r)^{\alpha} \ < \ \infty \ .
\end{equation}
\end{theorem}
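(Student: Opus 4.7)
The overall strategy is a Borel--Cantelli argument: it suffices to show that
\[
S:=\sum_{g\in G}\mu\bigl(K\cap B(g(y),\psi(L_g))\bigr)<\infty,
\]
for then $\mu$-a.e.\ $\xi\in K$ lies in only finitely many of the balls $B(g(y),\psi(L_g))$, hence outside $W_y(\psi)$. The individual terms are controlled by the decay condition, while the counting of the $g$'s is controlled by the separation of the orbit $\{g(y):g\in G\}$.

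For the measure estimate, I would argue as follows. Only those $g$ whose ball meets $K$ contribute to $S$; for each such $g$, choose $x_g\in K\cap B(g(y),\psi(L_g))$, so that $B(g(y),\psi(L_g))\subseteq B(x_g,2\psi(L_g))$. Since $\psi$ is decreasing and $L_g\to\infty$, we may discard a finite set of $g$'s and assume $2\psi(L_g)<r_0$. The weak $\alpha$-decay of $\mu$ at $x_g\in K$ then yields
\[
\mu\bigl(K\cap B(g(y),\psi(L_g))\bigr)\;\le\;\mu\bigl(B(x_g,2\psi(L_g))\bigr)\;\le\;C'\,\psi(L_g)^{\alpha},
\]
where $C'$ depends on $C$, $\alpha$, $r_0$ (and uses $\mu(B(x_g,r_0))\le\mu(\Lambda)=1$).

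Next, partition $G$ into dyadic shells $A_n=\{g\in G:L_g\in[2^n,2^{n+1})\}$, and let $N_n$ be the number of $g\in A_n$ whose ball meets $K$. The key geometric input is a uniform separation of the orbit: for distinct $g,g'\in A_n$, one has $|g(y)-g'(y)|\ge\eta/2^n$ for some $\eta>0$ depending only on $G$ and $y$. This follows from the disjointness of the canonical horoballs based at the parabolic fixed points (or the analogous disjointness of attracting/repelling neighbourhoods of the hyperbolic fixed points in the convex-cocompact case) — the same disjointness that underlies the proof of Theorem~\ref{singthm} and is inherent in Theorems~DT and DT$^{\prime}$. Combining this separation with the fact that each contributing $g(y)$ must lie in the $\psi(2^n)$-neighbourhood of $K$, and using the $\alpha$-decay of $\mu$ to quantify the $\mu$-thickness of this neighbourhood via a Vitali sub-cover of the disjoint shadow balls $B(g(y),\eta/2^{n+1})$, the aim is to establish
\[
N_n\;\le\;C''\cdot 2^{n\alpha}.
\]
Given this bound, summing the dyadic contributions gives
\[
S\;\le\;\sum_{n}N_n\cdot C'\psi(2^n)^{\alpha}\;\le\;C'''\sum_{n}2^{n\alpha}\psi(2^n)^{\alpha}\;\asymp\;\sum_{r=1}^{\infty}r^{\alpha-1}\psi(r)^{\alpha}\;<\;\infty
\]
by the monotonicity of $\psi$ and the integral test, completing the proof.

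The principal obstacle is the dyadic count $N_n\lesssim 2^{n\alpha}$. A crude volume argument based only on the orbit separation produces the coarse bound $N_n\lesssim 2^{nd}$, which is insufficient when $\alpha<d$. The refinement to $2^{n\alpha}$ is exactly where the hypothesis that $\mu$ (not merely $K$) is $\alpha$-decaying must be used essentially: one must convert the $\alpha$-decay of $\mu$ near $K$ into a packing-type bound on how many $\eta/2^n$-separated orbit points can sit within distance $\psi(2^n)$ of $K$. In the regime $\psi(2^n)\lesssim 1/2^n$ the contributing shadow balls are already pairwise disjoint, while in the regime $\psi(2^n)\gtrsim 1/2^n$ the count must be extracted by an averaging argument comparing $\sum_{g\in A_n}\mu(B(g(y),\psi(2^n))\cap K)$ with $\mu(K)$ through an integral over $x\in K$ of the orbit-counting function $\#\{g\in A_n:|x-g(y)|<\psi(2^n)\}$. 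Getting these two regimes to combine into the clean bound $2^{n\alpha}$ is the technical heart of the proof.
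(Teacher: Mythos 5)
Your Borel--Cantelli framework, dyadic decomposition, and initial reductions (keeping only the $g$ whose ball meets $K$, re-centering at a point $x_g\in K$) all track the paper. But the step where the decay hypothesis is actually used does not close, and the route you sketch to repair it will not work.

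The gap: you bound each contributing term by $C'\psi(L_g)^\alpha$ and then aim for a shell count $N_n\lesssim 2^{n\alpha}$. This count is \emph{false} in general for weakly absolutely $\alpha$-decaying measures. For example, if $\mu$ is Ahlfors $\beta$-regular with $\beta>\alpha$ (such a $\mu$ is weakly absolutely $\alpha$-decaying for every $\alpha\le\beta$), a packing argument shows that the number of $c_2/2^n$-separated orbit points within distance $\lesssim 2^{-n}$ of $K$ can be of order $2^{n\beta}$. The averaging scheme you outline cannot rescue this, because the weak-decay condition is a one-sided inequality: it provides no \emph{lower} bound on $\mu(B(x,r))$ for $x\in K$, so one cannot "charge" each separated orbit point a measure $\gtrsim 2^{-n\alpha}$ to deduce $N_n\lesssim 2^{n\alpha}$.

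The paper's proof avoids counting altogether. Having picked $\widetilde{g(y)}\in K\cap B(g(y),\psi(L_g))$, one exploits the nesting
\[
B\big(g(y),\psi(L_g)\big)\subseteq B\big(\widetilde{g(y)},2\psi(L_g)\big)\subseteq B\big(\widetilde{g(y)},\tfrac{c_2}{2L_g}\big)\subseteq B\big(g(y),\tfrac{c_2}{L_g}\big),
\]
valid once $\psi(L_g)<c_2/(8L_g)$ (which the convergence hypothesis eventually forces). Applying weak $\alpha$-decay between the two \emph{middle} balls, both centred at a point of $K$, converts the small-ball measure into a factor $\ll(2^n\psi(2^n))^\alpha$ times the large-ball measure $\mu\big(B(g(y),c_2/L_g)\big)$. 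Summing over the shell, these large balls are pairwise disjoint by Lemma~\ref{disjointness}, so their total measure is $\le 1$, yielding $\mu(A_y(\psi,n))\ll(2^n\psi(2^n))^\alpha$ with no reference to $N_n$ at all. Your version wastes this structure by normalizing the outer ball to a fixed scale $r_0$ and dropping $\mu(B(x_g,r_0))\le1$; the crucial idea is to use the \emph{shell-scale} ball $B(\widetilde{g(y)},c_2/(2L_g))$ as the reference ball, so that disjointness can absorb whatever the count turns out to be.

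A secondary point: you sum over all $g\in G$, but each orbit point $g(y)$ is hit by the full coset $gG_y$, so the parametrization is redundant. The paper first reduces to the geometrically chosen coset representatives $\mathcal{T}_y$ (noting that, by monotonicity of $\psi$, $W_y(\psi)=W^*_y(\psi)\cup G(y)$ and that $G(y)$ is $\mu$-null since $\mu$ is nonatomic); this is what makes the disjointness lemma applicable term-by-term in the shell sum.
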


\noindent

\vspace{1ex}

%

\begin{rem} \label{absdecay}
It is easily verified that if a measure  $\mu$ is
absolutely $\alpha$-decaying as defined in \cite{PVfriends} then it is weakly absolutely $\alpha$-decaying.  Also it is worth pointing out that although the Lebesgue measure $ | \ . \ |_{\cM} $ on a nondegenerate manifold $\cM$
is not necessarily absolutely $\alpha$-decaying,   it is  weakly absolutely $\alpha$-decaying.
\end{rem}

\vspace{1ex}

Observe that if we write $\psi_\tau(r) = r^{-\tau}$, then

\[
\sum_{r=1}^{\infty}
r^{\alpha -1 } \p_\tau(r)^{\alpha} = \sum_{r=1}^{\infty}
r^{\alpha(1- \tau) -1 } \ < \ \infty \qquad {\forall} \; \tau > 1 \;\; \forall \; \alpha > 0.
\]
Hence the following statement is a trivial consequence of Theorem \ref{mainext}.

\begin{corollary} \label{main2} Let $G$ be a nonelementary, geometrically finite Kleinian group and let $y$ be a parabolic fixed point of $G$, if there are any, and a hyperbolic fixed point otherwise. Let $K$ be a compact subset of $\Lambda$ equipped with a weakly absolutely decaying measure $\mu$. Then $K$ is $\mu$-extremal.
\end{corollary}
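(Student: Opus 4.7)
To prove Corollary \ref{main2}, I would simply invoke Theorem \ref{mainext}. Fix $\tau > 1$ and apply the theorem to the approximating function $\psi_\tau(r) := r^{-\tau}$. Then
\[
\sum_{r=1}^{\infty} r^{\alpha-1}\psi_\tau(r)^\alpha \ =\ \sum_{r=1}^{\infty} r^{\alpha(1-\tau)-1} \ <\ \infty,
\]
because $\alpha > 0$ and $\tau > 1$ make the exponent strictly less than $-1$. Theorem \ref{mainext} then yields $\mu(K \cap W_{y}(\tau)) = 0$ for every $\tau > 1$, which is the definition of $\mu$-extremality. This is exactly the derivation already indicated in the computation preceding the corollary statement.

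Since the corollary reduces to this one-line manipulation, let me instead describe how I would attack the underlying Theorem \ref{mainext}, where the Kleinian group geometry really enters. Writing $A_g := B(g(y), \psi(L_g))$ we have $W_{y}(\psi) = \limsup_{g \in G} A_g$, so by Borel--Cantelli it is enough to show
\[
\sum_{g \in G} \mu(A_g \cap K) \ <\ \infty.
\]
Monotonicity of $\psi$ together with the convergence hypothesis $\sum r^{\alpha-1}\psi(r)^\alpha < \infty$ forces $r\psi(r) \to 0$ (compare the dyadic tail $\sum_{r \in [n, 2n]} r^{\alpha - 1} \psi(r)^\alpha \gtrsim n^\alpha \psi(2n)^\alpha$), so $\psi(L_g) < c/L_g$ for a fixed small constant $c$ and all but finitely many $g$; the exceptional $g$ contribute a harmless finite sum.

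The plan then combines two ingredients. First, a local measure estimate: whenever $A_g \cap K \neq \emptyset$, pick $x_g \in A_g \cap K$ and apply the weakly $\alpha$-decaying property at $x_g$ with reference radius $c/L_g$ (valid as soon as $c/L_g < r_0$). Since $B(x_g, 2\psi(L_g)) \subseteq B(x_g, c/L_g) \subseteq A_g^{*} := B(g(y), 2c/L_g)$, the decay inequality produces
\[
\mu(A_g \cap K) \ \leq\ \mu\bigl(B(x_g, 2\psi(L_g))\bigr) \ \lesssim\ (\psi(L_g)\, L_g)^{\alpha}\, \mu(A_g^{*} \cap K).
\]
Second, a bounded-multiplicity estimate for the shadow balls $A_g^{*}$ at a fixed dyadic level $L_g \in [2^k, 2^{k+1})$: this is standard Patterson--Sullivan shadow-lemma material, combined with horoball disjointness (with suitable handling of the parabolic stabiliser $G_y$), and it delivers $\sum_{g:\, L_g \in [2^k, 2^{k+1})} \mu(A_g^{*} \cap K) = O(\mu(K))$. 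Combining the two estimates and summing over dyadic levels produces
\[
\sum_g \mu(A_g \cap K) \ \lesssim\ \sum_k (2^k \psi(2^k))^{\alpha} \ \asymp\ \sum_r r^{\alpha-1}\psi(r)^\alpha \ <\ \infty,
\]
and Borel--Cantelli finishes the proof. The main obstacle, to my mind, is the correct choice of reference radius in the decay step: a fixed reference radius independent of $L_g$ only yields $\mu(A_g \cap K) \lesssim \psi(L_g)^\alpha$, and combined with the global orbit count $\#\{g : L_g \leq T\} \asymp T^\delta$ this is too crude whenever $\alpha < \delta$, which is the typical situation for proper subsets of $\Lambda$. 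Switching to the shadow scale $c/L_g$ is what trades the global count for the bounded-multiplicity count on shadows, and it is precisely this switch that produces the series exponent $\alpha - 1$ appearing in the hypothesis.
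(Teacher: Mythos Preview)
Your derivation of the corollary from Theorem~\ref{mainext} is exactly the paper's: apply the theorem with $\psi_\tau(r)=r^{-\tau}$ and observe that the resulting series has exponent $\alpha(1-\tau)-1<-1$.

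Your additional sketch of Theorem~\ref{mainext} itself also tracks the paper's argument closely --- Borel--Cantelli after a dyadic decomposition in $L_g$, the observation $r\psi(r)\to 0$, recentring each relevant ball on a point of $K$, and the weak-decay comparison at the shadow scale $\asymp L_g^{-1}$. One simplification relative to your outline: no Patterson--Sullivan shadow-lemma input is needed for the ``bounded-multiplicity'' step. Lemmas~\ref{disjointness} and~\ref{disjointnessH} show that within a fixed dyadic block $2^{n}<L_g\le 2^{n+1}$ the enlarged balls $B(g(y),c_2/L_g)$ are pairwise \emph{disjoint}, so their $\mu$-measures sum to at most $\mu(K)\le 1$ outright --- bounded overlap is more than one actually gets, and more than one needs. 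Your parenthetical ``handling of the parabolic stabiliser $G_y$'' is precisely the passage to the coset representatives $\mathcal{T}_p$ carried out in the paper.
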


\medskip

The deeper and more subtle analogue of the Dream Theorem for Kleinian groups is the subject of \S\ref{dreamG}.



\subsection{A Jarn\'{\i}k-type theorem and ``Bad'' subsets of $\Lambda$}

We motivate the contents of this section by returning to inequality \eqref{KGDTio} and asking: can the group constant $c>0$ be made arbitrarily small? In other words, if we let $ \psi_{\ep} : r \mapsto \ep \, r^{-1}$, then do we have $ W_{y}(\psi_{\ep}) \supseteq \Lambda\setminus G(P)$ for all $\ep > 0$? It follows immediately from Theorem KT that $ m(W_{y}(\psi_{\ep})) = 1 = m( \Lambda\setminus G(P))$. Thus, the set of exceptions to the above inclusions, i.e. the set
\[
\bad_y := \left\{ \xi \in \Lambda : \ \exists \ \ c(\xi) > 0 {\rm \ such \ that \ } | \xi - g(y) | > c(\xi)/ L_g \ \ \forall \ \
g \in G \right\} \, ,
\]
is of $m$-measure zero. Nevertheless, the answer to the above question is emphatically no since the exceptional set of ``badly approximable'' limit points has full Hausdorff dimension. The following Jarn\'{\i}k-type theorem was first established by Patterson \cite[Section 10]{Paddyrs} for finitely generated Fuchsian groups of the first kind. As usual, $y$ is taken to be a parabolic fixed point of $G$ if the group has parabolic elements and a hyperbolic fixed point of $G$ otherwise.

\vspace{3ex}

\begin{thJT}
Let $G$ be a nonelementary, geometrically finite Kleinian group and let $y$ be a parabolic fixed point of $G$, if there are any, and a hyperbolic fixed point otherwise. Then
\[
\dim \bad_y = \dim \Lambda \, .
\]
\end{thJT}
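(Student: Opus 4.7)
My plan is to prove the nontrivial direction $\dim \bad_y \geq \delta$ (the bound $\dim \bad_y \leq \dim \Lambda = \delta$ is automatic) by a Cantor-type construction. For each large integer $R$ I will exhibit a compact set $\cC_R \subseteq \bad_y$ with
\[
\dim \cC_R \ \geq \ \delta - \frac{c_0}{\log R},
\]
and then let $R \to \infty$. The construction rests on two facts about the Patterson measure $m$: (i) the global measure formula of Stratmann--Velani implies that $m$ is Ahlfors $\delta$-regular on any compact piece of $\Lambda$ bounded away from the parabolic fixed points of $G$; (ii) the orbit $G(y)$ distributes uniformly with respect to $m$ on such a piece, in the sense of an appropriate orbit-counting estimate.

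First I would choose a radial limit point $\xi_0 \in \Lambda$ and a sufficiently small ball $B_0$ around $\xi_0$ so that $K := B_0 \cap \Lambda$ stays at a positive distance from every parabolic fixed point of $G$. By (i),
\[
c_1 r^\delta \ \leq \ m(B(x,r)) \ \leq \ c_2 r^\delta \qquad \forall \ x \in K, \ 0 < r \leq r_0 .
\]
Next, fix a small constant $\epsilon > 0$ and set $c := \epsilon/R$, $r_n := c R^{1-n}$ (so $r_n / r_{n-1} = 1/R$). I recursively build a nested sequence $\mathcal{B}_0 = \{B_0\} \supseteq \mathcal{B}_1 \supseteq \cdots$ of finite families of pairwise disjoint closed $r_n$-balls centred in $K$, arranged so that each $B \in \mathcal{B}_n$ sits inside some $B' \in \mathcal{B}_{n-1}$ and avoids every ``forbidden ball'' $B(g(y),\, c/L_g)$ with $L_g \leq R^n$. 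Any $\xi$ in the nested intersection $\cC_R := \bigcap_n \bigcup_{B \in \mathcal{B}_n} B$ then satisfies $|\xi - g(y)| \geq c/L_g$ for every $g \in G$, which places $\xi$ in $\bad_y$.

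The inductive step is the heart of the argument. Inside a parent ball $B' \in \mathcal{B}_{n-1}$, a maximal $3 r_n$-separated subset of $K \cap B'$ has cardinality at least $c_3 R^\delta$ by Ahlfors regularity. The newly introduced forbidden balls at level $n$ correspond to $g \in G$ with $L_g \in (R^{n-1}, R^n]$; each has radius at most $c/R^{n-1} = r_n$, so it can eliminate only $O(1)$ points of the separated set. The crucial uniform estimate is
\[
\#\bigl\{g \in G/G_y : L_g \in (R^{n-1}, R^n], \ g(y) \in 2 B'\bigr\} \ \lesssim \ c^\delta R^{2\delta} \ = \ \epsilon^\delta R^\delta,
\]
with an implicit constant independent of $n$. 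Choosing $\epsilon$ small enough that the product of $\epsilon^\delta R^\delta$ with the $O(1)$ kill-count per forbidden ball stays below $c_3 R^\delta / 2$ leaves at least $N_R := \tfrac{1}{2} c_3 R^\delta$ surviving children. The mass distribution principle applied to the natural self-similar probability measure on $\cC_R$ then yields $\dim \cC_R \geq \log N_R / \log R = \delta - O(1/\log R)$; sending $R \to \infty$ finishes the proof.

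The main obstacle will be justifying the uniform orbit-counting estimate above, particularly when $y$ is a parabolic fixed point of non-maximal rank. Then $w(y) = 2\delta - \mathrm{rank}(y) > \delta$ and the orbit $G(y)$ clusters heavily near every equivalent parabolic fixed point, ruling out any globally uniform count. The restriction to the thick region $K$ is precisely what excises the neighbourhoods of the cusps where this clustering occurs, and the hope is that it restores a generic-conical-orbit count comparable to $(L_g \cdot r)^\delta$. Making this rigorous via the global measure formula, and correctly handling the stabiliser $G_y$ when passing from $G$ to $G/G_y$, is the technical core of the proof.
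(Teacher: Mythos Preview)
There is a gap at the very start of your construction. When $G$ has parabolic elements, the parabolic fixed points (the full orbit $G(P)$, not just the finite set $P$) are \emph{dense} in $\Lambda$, so no open piece $K = B_0 \cap \Lambda$ can be kept at positive distance from all of them, and the Patterson measure is \emph{never} Ahlfors $\delta$-regular on any such $K$. What the global measure formula actually says is that $m(B(\xi,r)) \asymp r^\delta$ only when the geodesic ray towards $\xi$ lies in the thick part of the quotient manifold at depth $\log(1/r)$; this is a scale-dependent condition, not a condition on $\xi$ alone. A correct hands-on construction (as in \cite{StrapbadGF}, the reference the paper cites for this case) must therefore maintain at every level $n$ the additional requirement that the balls in $\mathcal B_n$ avoid the shadows of standard horoballs at scale $r_n$. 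This introduces a second family of forbidden regions alongside your orbit balls, and controlling both simultaneously is exactly the technical content of the cited proof.

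The paper does not itself prove Theorem~JT --- it is quoted as known --- but \S3 proves the closely related Theorem~3 by a genuinely different route: one shows that $\bad_y$ is \emph{absolute winning} on $\Lambda$ (Lemma~4), whence $K\cap\bad_y$ is winning, and therefore of full dimension, on any Ahlfors-regular $K\subset\Lambda$ (Lemmas~2--3). The only Kleinian-group input to Lemma~4 is the disjointness lemma (Lemma~1 or~2 of the paper), and the point you are missing is that this lemma already gives, in any ball $B(x,\rho)$, \emph{at most one} orbit point $g(y)$ with $L_g$ in the single dyadic window $[k^n,k^{n+1})$ determined by $\rho$. So there is one forbidden ball per scale, the orbit-counting step becomes trivial, and nothing depends on $\mathrm{rank}(y)$ or on the global measure formula. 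Your worry about non-maximal rank is thus a symptom of having chosen too coarse a scale ratio $R$: the clustering you fear is invisible once one works dyadically. The games formulation also delivers the countable-intersection and quasisymmetric-invariance properties for free, which a bare Cantor construction does not.
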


\begin{rem} \label{bdedbad}
When $G$ has parabolic elements, a stronger version of the above theorem is known: if $P$ is a complete set of inequivalent parabolic points, then
\[
\dim\bigcap_{p\in P} \bad_p = \dim \Lambda \, .
\]
This stronger theorem has a well-known dynamical interpretation; namely that the set of bounded geodesics on the associated hyperbolic manifold $ \cH =  B^{d+1} /G $ is of full dimension.
\end{rem}

\noindent Without assuming that $G$ is of the first kind, Theorem JT is established in \cite{StrapCoCompact} if $y$ is a hyperbolic fixed point of $G$ and in \cite{StrapbadGF} if $y$ is a parabolic fixed point of $G$. When interpreted on the upper half-plane $\Half^2$ and applied to the modular group $SL(2,\Z)$, it is easily verified that Theorem JT reduces to the $d=1$ case of the Jarn\'{\i}k--Schmidt Theorem on the size of the classical set $\bad(d) $ of simultaneously badly approximable numbers. Recall that $\bad(d) $ is the set of $\x\in \R^d $ such that there exists a positive constant $c(\x) >0 $ so that

\[
\max_{1 \le i \le d} \Big | x_i - \frac{p_i}{q} \Big| \ \ge \ c(\x) \,   q^{-\frac{d+1}{d}} \quad \forall \ \
(\pbf,q) \in \Z^d \times \N \, .
\]
\vspace*{2ex}

\begin{thjarsch}
For $d \ge 1$, we have that
$
\dim \bad(d) = d . \,
$
\end{thjarsch}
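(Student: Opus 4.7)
The plan is to prove $\dim \bad(d) = d$ via Schmidt's $(\alpha,\beta)$-game. The upper bound $\dim \bad(d) \le d$ is immediate from $\bad(d) \subseteq \R^d$. For the lower bound I will show that $\bad(d)$ is $\alpha$-winning for some $\alpha \in (0,1/2)$; Schmidt's fundamental theorem that $\alpha$-winning subsets of $\R^d$ have Hausdorff dimension $d$ then yields $\dim \bad(d) = d$.

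Recall the game: Alice and Bob alternately nest closed balls $B_0 \supseteq A_0 \supseteq B_1 \supseteq A_1 \supseteq \cdots$ in $\R^d$, with Bob picking $B_n$ of radius $r_n = (\alpha\beta)^n r_0$ and Alice responding with $A_n \subseteq B_n$ of radius $\alpha r_n$; the set $\bad(d)$ is $\alpha$-winning if, for every $\beta \in (0,1)$, Alice has a strategy forcing the unique point $\xx^* \in \bigcap_n B_n$ into $\bad(d)$. Alice fixes a constant $c = c(\alpha,d) > 0$ in advance and aims for the constraint $\max_i |x_i^* - p_i/q| \ge c\, q^{-(d+1)/d}$ for every rational $\mathbf{p}/q \in \Q^d$. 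At round $n$ she handles the ``newly critical'' rationals $\mathbf{p}/q$ with denominator $q \asymp (c/r_n)^{d/(d+1)}$, for which the forbidden cube $F(\mathbf{p}/q) := \{\xx : \max_i |x_i - p_i/q| < c\, q^{-(d+1)/d}\}$ has diameter comparable to $\alpha r_n$: rationals with much smaller $q$ have larger forbidden cubes that were escaped at earlier rounds by nesting, and those with much larger $q$ will be dealt with later. She then selects $A_n \subseteq B_n$ of radius $\alpha r_n$ disjoint from every critical $F(\mathbf{p}/q)$ that meets $B_n$.

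The main obstacle is verifying that such an $A_n$ always exists. The key input is the elementary separation estimate $\|\mathbf{p}_1/q_1 - \mathbf{p}_2/q_2\|_\infty \ge 1/(q_1 q_2)$ for distinct rationals, which forces the critical rationals in $B_n$ to be sparse once $c$ is small in terms of $\alpha$ and $d$; combined with the diameter bound for each forbidden cube, a volume/overlap count shows that the critical forbidden cubes cannot block every possible position of an $\alpha r_n$-ball inside $B_n$. For $d=1$ this is Schmidt's classical calculation, while for general $d$ the combinatorial count is more delicate and is perhaps most cleanly carried out inside a strengthened variant of the game (such as the hyperplane absolute game, for which $\bad(d)$ is known to be winning). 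Once the feasibility of Alice's moves is established uniformly in $n$ and $\beta$, her strategy is winning, so $\bad(d)$ is $\alpha$-winning and Schmidt's dimension theorem delivers $\dim \bad(d) = d$.
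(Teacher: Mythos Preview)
The paper does not actually prove this statement: the Jarn\'{\i}k--Schmidt Theorem is quoted as a classical background result, with the remark that Schmidt established it by showing $\bad(d)$ is winning for the game now bearing his name. So there is no ``paper's proof'' to compare against beyond that one-line attribution. Your choice of approach --- Schmidt's game plus the fact that $\alpha$-winning subsets of $\R^d$ have full dimension --- is exactly the route the paper alludes to.

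That said, as a self-contained proof your proposal has a genuine gap at the key step. You correctly identify that the whole argument hinges on the feasibility of Alice's move: that the critical forbidden cubes $F(\mathbf{p}/q)$ meeting $B_n$ cannot cover all possible positions for $A_n$. For $d=1$ the separation estimate $|p_1/q_1 - p_2/q_2| \ge 1/(q_1 q_2)$ immediately bounds the number of critical rationals in $B_n$ by an absolute constant, and the calculation goes through. For $d \ge 2$, however, the same separation bound only controls the $\ell^\infty$-distance in \emph{one} coordinate, and there can be many critical rationals $\mathbf{p}/q$ with $q$ in the relevant range inside a ball of the given size --- the naive ``volume/overlap count'' you invoke does not close without further structural input. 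You acknowledge this yourself by writing that the case $d>1$ ``is perhaps most cleanly carried out inside a strengthened variant of the game (such as the hyperplane absolute game, for which $\bad(d)$ is known to be winning)'', but appealing to that known fact is precisely what you are meant to be proving. Either supply the actual combinatorial argument (Schmidt's original 1969 proof does this via a careful induction and counting), or give the hyperplane-absolute-winning argument in full; as written, the proposal is a correct outline that stops short of the substantive step.
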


The $d=1$ case is due to Jarn\'{\i}k (1928) while the general statement is due to Schmidt (1969).  Indeed, Schmidt showed that the set $\bad(d)$ satisfies a stronger ``winning'' property associated with a certain game that now bears his name. Staying within the classical setting, we turn to the badly approximable version of the manifold theory described in \S\ref{KTTsec}. The following statement is the natural analogue of Conjecture~1.

\begin{thmenv}{Conjecture 2} Let $\cM$ be a nondegenerate submanifold of $\R^d$. Then
\begin{equation*}\label{vb23}
\dim ( \cM\cap \bad(d) ) = \dim \cM \, .
\end{equation*}
\end{thmenv}

\noindent We now describe various ``general'' contributions towards Conjecture 2.

\begin{itemize}

\item {\em Planar curves.} The conjecture is true when $d=2$; that is, when $\cM$ is a nondegenerate planar curve. This was established in \cite{Badziahin-Velani-Dav} and independently in \cite{BadM} and provides a solution to a problem of Davenport dating back to the sixties concerning the existence of badly approximable pairs on the parabola. The stronger ``winning'' property has subsequently been established in \cite{An-Beresnevich-Velani}.

\item {\em Beyond planar curves.} The conjecture is true for analytic nondegenerate submanifolds of $\R^d$ \cite{BadM}. The condition of analyticity can be omitted in the case where the submanifold $\cM\subseteq\R^d$ is a curve. Indeed, establishing the result for curves is very much at the heart of the approach in \cite{BadM}.
\end{itemize}

\noindent For a more in-depth overview of the badly approximable manifold theory, we refer the reader to \cite[Section 7]{durham} and the references within.

\vspace*{2ex}

Motivated by the above  (badly approximable) manifold theory in $\R^d$, we aim to develop an analogous theory within the hyperbolic space setting. Thus, as in \S\ref{KTTsec}, let $K$ be a subset of the limit set $\Lambda$ which supports a nonatomic probability measure $\mu$. We would like to conclude that $ K \cap \bad_y $ is of full dimension for a general class of sets $K$. With this in mind, we impose the condition that the measure $\mu$ supported on $K$ is {\em Ahlfors $\delta$-regular} for some $\delta > 0$; that is, that there exist strictly positive constants $ C, r_0 $ such that
\[
C^{-1} \, r^{\delta}\ \le \ \mu\big(B(x, r) \big) \ \le \ C \, r^{\delta} \hspace{7mm} \forall \ x \in K
\hspace{5mm} \forall \ r < r_0 \ .
\]
\noindent  Sets supporting such measures are referred to as Ahlfors $\delta$-regular and it is a well known fact that
 $$
 \dim K =  \delta  \, .
 $$
 For Ahlfors $\delta$-regular  subsets of the limit set  we are able to prove the following result.

\begin{theorem} \label{mainbad}
Let $G$ be a nonelementary, geometrically finite Kleinian group and let $y$ be a parabolic fixed point of $G$, if there are any, and a hyperbolic fixed point otherwise. Let $K$ be a compact, Ahlfors $\delta$-regular subset of $\Lambda$. Then
\begin{equation} \label{ineqmain3}
\dim \left( K \cap \bad_y \right) = \dim K \ . \end{equation}
\end{theorem}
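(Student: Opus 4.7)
The plan is to build, inside $K \cap \bad_y$, a Cantor-type subset of Hausdorff dimension arbitrarily close to $\delta$, exploiting Ahlfors $\delta$-regularity of $\mu$ together with a uniform bound on the local density of the orbit $G(y)$ at each hyperbolic scale.

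First I would write $\bad_y = \bigcup_{c > 0} \bad_y(c)$, where
\[
\bad_y(c) := \{\xi \in \Lambda : |\xi - g(y)| \geq c/L_g \text{ for all } g \in G\},
\]
and observe that it suffices to bound $\dim(K \cap \bad_y(c))$ from below for small $c$. I would then run a nested Cantor construction. Fix a large integer $M$ and set $r_n := r_0 M^{-n}$ with $r_0$ smaller than the Ahlfors scale threshold. Inductively produce a family $\mathcal{B}_n$ of pairwise disjoint closed balls of radius $r_n$, each centred in $K$ and contained in some parent in $\mathcal{B}_{n-1}$, such that every ball in $\mathcal{B}_n$ is disjoint from every \emph{dangerous ball}
\[
D_g := B\bigl(g(y),\, c/L_g\bigr), \qquad g \in G, \ r_n^{-1} \leq L_g < r_{n+1}^{-1}.
\]
By Ahlfors $\delta$-regularity, every $B \in \mathcal{B}_n$ contains at least $\kappa_1 M^\delta$ pairwise disjoint $K$-centred sub-balls of radius $r_{n+1}$, each of $\mu$-mass $\asymp r_{n+1}^\delta$, for an absolute constant $\kappa_1 > 0$. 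The inductive step succeeds if at most half of these candidates meet the new dangerous balls.

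The geometric input is a counting bound: for every ball $B \subset S^d$ of radius $r$ and every $N \geq 1$,
\[
\#\{g \in G : g(y) \in 3B,\ N \leq L_g < 2N\} \ \leq \ C\,(rN)^{w(y)},
\]
where $w(y)$ is the exponent from Theorem~KT and $C$ depends only on $G$ and $y$. In the parabolic case this follows from disjointness of Sullivan's horoballs based at the orbit $G(p)$ together with the Stratmann--Velani global measure formula (reading off the count from the Patterson mass of the shadow of $B$); in the convex cocompact case the analogous estimate is established in \cite{StrapCoCompact}. Applying this with $r \asymp r_n$ and $N \asymp r_{n+1}^{-1}$ bounds by $O(M^{w(y)})$ the orbit points $g(y)$ capable of producing a dangerous ball meeting a fixed $B \in \mathcal{B}_n$. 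Each such $D_g$ has radius $\leq c\,r_{n+1}$ and so meets at most $O(c^\delta)$ of the candidate sub-balls (by Ahlfors regularity and a packing argument). The total number of eliminated sub-balls is therefore $O(c^\delta M^{w(y)})$; choosing $c$ so small that $c^\delta M^{w(y)} \leq \tfrac{1}{2}\kappa_1 M^\delta$ completes the inductive step.

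Finally, $E := \bigcap_n \bigcup \mathcal{B}_n \subseteq K \cap \bad_y(c)$. A standard mass distribution argument (each node has at least $\tfrac{1}{2}\kappa_1 M^\delta$ children and children have diameter $M^{-1}$ times that of the parent) yields $\dim E \geq \delta - O(1/\log M)$. Letting $M \to \infty$ proves $\dim(K \cap \bad_y) \geq \delta = \dim K$; the reverse inequality is trivial since $K \cap \bad_y \subseteq K$.

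The principal obstacle will be the orbit-counting bound in the convex cocompact case, where $y$ is merely a hyperbolic fixed point: one cannot directly invoke a horoball-packing argument about the stabiliser of $y$, and instead must exploit the ``Anosov'' separation between orbit points of $y$ at a given scale, as in \cite{StrapCoCompact}. A secondary technical issue is tracking the Ahlfors-regularity constants when packing $B \in \mathcal{B}_n$ by $\asymp M^\delta$ disjoint $K$-centred sub-balls of radius $r_{n+1}$; this follows from a Vitali covering, but the dependence of $\kappa_1$ on the regularity data must be controlled independently of $M$ so that the choice $c = c(M)$ in the inductive step is admissible.
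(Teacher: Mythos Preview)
Your hands-on Cantor construction is a genuinely different route from the paper's: there the theorem is obtained by assembling three known ingredients---$\bad_y$ is absolute winning on $\Lambda$ (Lemma~\ref{lemmaMM}, essentially Mayeda--Merrill), absolute winning passes to Schmidt-winning on any Ahlfors regular subset (Lemma~\ref{lemmaBFKRW}, from BFKRW), and Schmidt-winning subsets of an Ahlfors $\delta$-regular set have dimension $\delta$ (Lemma~\ref{lemmafishman}, Fishman). The paper explicitly contrasts this with the hands-on alternative and notes that the games route additionally delivers countable-intersection stability and quasi-symmetric invariance of $\bad_y$.

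Your execution, however, has a genuine gap. The dangerous balls carry $L_g \in [r_n^{-1}, r_{n+1}^{-1})$, so their radii lie in $(c\,r_{n+1},\, c\,r_n]$, not $\leq c\,r_{n+1}$ as you claim; and the single dyadic block $N \asymp r_{n+1}^{-1}$ to which you apply the count captures only the smallest of these. Once the radius is corrected to $\leq c\,r_n = cM\,r_{n+1}$, ``meets $O(c^\delta)$ sub-balls'' becomes $O\bigl((cM+1)^\delta\bigr)$, and after summing the count over all $\log M$ dyadic blocks your balance reads roughly $M^{\gamma}\cdot (cM+1)^\delta \ll M^\delta$ for some orbital-counting exponent $\gamma$. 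In the regime $cM \leq 1$ this forces $M^{\gamma} \ll M^\delta$; but here $\delta$ is the regularity exponent of $K$, while the natural $\gamma$ is at least $\dim\Lambda \geq \delta$, so for a strictly thin $K \subsetneq \Lambda$ the number of dangerous orbit points can exceed the number of candidate children no matter how small $c$ is. What actually rescues the situation is not a polynomial count at all, but the much sharper consequence of the disjointness lemmas (Lemmas~\ref{disjointness} and~\ref{disjointnessH}): at each \emph{single} dyadic scale, a ball of comparable radius meets at most one orbit point $g(y)$. Exploiting this ``one bad point per scale'' structure is precisely what drives the absolute-game proof; a correct hands-on version would have to replicate it, effectively running the construction one dyadic scale at a time rather than $M$-adically.
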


\noindent

\vspace{1ex}

\begin{rem} \label{reg}
As we shall see in \S\ref{proofmainbad}, the above theorem, although new, is a consequence of combining various recent results. \end{rem}

 \begin{rem}  The methods used to establish the main results in this paper (namely Theorems \ref{singthm}, \ref{mainext}, $\&$ \ref{mainbad})  can almost certainly  be adapted to prove analogous statements for rational maps.  Within the context of Theorem \ref{singthm} $\&$ \ref{mainext},  Sky Brewer is currently developing a general framework that naturally incorporates both the Kleinian group and rational map setup. As we shall see, the proof of Theorem \ref{mainbad} already makes use of  a  powerful and general framework for investigating badly approximable sets.
 \end{rem}


\section{Proof of Theorems \ref{singthm} $\&$ \ref{mainext} }

We begin with a preliminary section in which we provide the reader with necessary concepts and results required in the proof of Theorems \ref{singthm} $\&$ \ref{mainext}. It also enables us to geometrically restate the theorems in terms of horoballs from which it is relatively straightforward to derive their dynamical interpretations in terms of geodesic excursions on the associated hyperbolic manifolds. A {\em horoball} $H_{\xi} $ based at $\xi \in  S^d$ is an open $(d+1)$-dimensional Euclidean ball contained in $ B^{d+1}$ such that its boundary $ \partial H_{\xi} $ is tangent to $ S^d $ at the point $ \xi $. For each $\xi \in S^d $, let $s_{\xi}$ be the ray in $B^{d+1}$ joining the origin to $ \xi$. The {\sl top} $ \sigma_{\xi} := s_{\xi} \cap \partial H_{\xi} $ of a horoball $ H_{\xi} $ such that $0 \notin H_\xi$ is the point on $ \partial H_{\xi} $ closest to the origin.

\medskip

\noindent{\sl Notation.} The symbols $ \ll $ and $ \gg $ will be used to indicate an inequality with an implied unspecified positive multiplicative constant factor. If $a \ll b $ and $ a \gg b $ we write $ a \asymp b$, and say that the quantities $a$ and $b$ are \emph{comparable}.

\subsection{Preliminaries}

\medskip

To start with, assume that the nonelementary, geometrically finite group $G$ contains parabolic elements. As usual, let $P$ denote a complete set of inequivalent parabolic fixed points of $G$. Clearly, the orbit $G(P)$ of points in $P$ under $G$ is the set of all parabolic fixed points of $G$. To each $p \in P$ we associate a horoball $H_p$ and we write $H_{g(p)} $ for the image of $H_p$ under $g \in G$. It is well known that the horoballs $ H_p \; (p \in P)$ can be chosen so that their images under $G$ comprise a set of pairwise disjoint horoballs, see \cite{Bowditch}. By construction, any set $ \{ H_\xi : \xi \in G(P) \} $ chosen in this manner is $G$-invariant and is said to be a {\em standard set of horoballs} for $G$. Naturally, a horoball in a standard set is called a standard horoball.

A relatively simple argument shows that the top $\sigma_\xi$ of any standard horoball $H_\xi$ lies within a bounded hyperbolic distance (dependent only on $G$ and $P$) from the orbit of the origin under $G$ \cite[Lemma 2.2]{Strap}. In view of this, for each $p\in P$, there is a geometrically motivated set $\TT_p$ of representatives of the cosets $\{g G_p : g\in G\}$, such that for all $g\in \TT_p$, the orbit point $g(0)$ lies within a bounded distance (dependent only on $G$ and $P$) from the top of the standard horoball $H_{g(p)}$. Indeed, for each $\xi \in G(P)$ we may choose $g\in G$ so as to minimize $L_g$ subject to the restriction that $g(p) = \xi$, and then we can let $\TT_p$ be the collection of all group elements chosen in this manner. Let $R_g$ denote the Euclidean radius of $H_{g(p)}$. We remark that $R_g$ is only defined for $g \in \mathcal{ T}_p $ and that
\begin{equation} \label{comp} R_g \, \asymp \, 1-|g(0)| \, \asymp L_g \, . \end{equation}
This together with the fact that the horoballs in a standard set are disjoint implies the following extremely useful statement; see \cite[\S2.4]{jbgfg}.

\begin{lemma}[Disjointness] \label{disjointness}
Let $G$ be a nonelementary, geometrically finite Kleinian group containing parabolic elements and let $P$ denote a complete set of inequivalent parabolic fixed points of $G$. There is a constant $c_1> 0 $ depending only on $G$ and $P$ with the following property: for all $p,q \in P$ and $ g,h \in G$ such that $g(p) \neq h(q)$, we have

\[
| g(p) - h(q)| > \frac{c_1}{\sqrt{L_g L_h}} \, .
\]
In particular, fix $k > 1$ and suppose that $ k^{n} \le L_g, L_h < k^{n+1}$ for some $n \in \N$. Then
\[
B\big(g(p), c_2/L_g \big) \cap B\big(h(q), c_2/L_h \big) = \emptyset
\]
where $ c_2 := c_1/2 \sqrt{k} $.
\end{lemma}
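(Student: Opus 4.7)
The plan is to translate the qualitative disjointness of a standard set of horoballs into a quantitative Euclidean separation estimate, and then cash in the comparability $R_g \asymp L_g^{-1}$ recorded in \eqref{comp}. (I read \eqref{comp} as $R_g \asymp 1-|g(0)| \asymp L_g^{-1}$, which is the only interpretation consistent with the definition $L_g = (1-|g(0)|^2)^{-1}$ given earlier.) Before doing any geometry, I would first reduce to the case $g\in\TT_p$, $h\in\TT_q$: if $g_0\in\TT_p$ is the chosen minimiser satisfying $g_0(p)=g(p)$, then $L_{g_0}\leq L_g$, so a bound of the form $|g(p)-h(q)| > c_1/\sqrt{L_{g_0}L_{h_0}}$ forces the weaker conclusion $|g(p)-h(q)| > c_1/\sqrt{L_gL_h}$ for arbitrary $g,h$.

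The core step is an elementary Euclidean calculation. A standard horoball $H_\xi$ based at $\xi\in S^d$ of Euclidean radius $R_\xi$ is internally tangent to $S^d$ at $\xi$, hence has centre $(1-R_\xi)\xi$. For $g\in\TT_p$, $h\in\TT_q$ with $g(p)\neq h(q)$, the horoballs $H_{g(p)}$ and $H_{h(q)}$ belong to the same standard set and are therefore disjoint, so the distance between their centres is at least $R_g+R_h$. Expanding and using $|g(p)|=|h(q)|=1$ yields the identity
\[
|(1-R_g)g(p) - (1-R_h)h(q)|^{2} \;=\; (R_g-R_h)^{2} + (1-R_g)(1-R_h)\,|g(p)-h(q)|^{2},
\]
so the disjointness inequality $(R_g+R_h)^2 \leq |(1-R_g)g(p) - (1-R_h)h(q)|^{2}$ rearranges to
\[
|g(p)-h(q)|^{2} \;\geq\; \frac{4R_gR_h}{(1-R_g)(1-R_h)} \;\geq\; 4R_gR_h.
\]
Inserting $R_g\asymp L_g^{-1}$ and $R_h\asymp L_h^{-1}$ from \eqref{comp} produces the first inequality with an explicit constant $c_1$ depending only on $G$ and $P$.

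For the second assertion I would argue by contradiction. If $B(g(p),c_2/L_g) \cap B(h(q),c_2/L_h) \neq \emptyset$, then the triangle inequality gives $|g(p)-h(q)|\leq c_2/L_g + c_2/L_h$. Under the hypothesis $L_g,L_h\in[k^{n},k^{n+1})$ the ratios $\sqrt{L_g/L_h}$ and $\sqrt{L_h/L_g}$ are each at most $\sqrt{k}$, so
\[
\bigl(c_2/L_g + c_2/L_h\bigr)\sqrt{L_gL_h} \;=\; c_2\bigl(\sqrt{L_h/L_g}+\sqrt{L_g/L_h}\bigr) \;\leq\; 2\sqrt{k}\,c_2 \;=\; c_1,
\]
which contradicts the first inequality; the choice $c_2 = c_1/(2\sqrt{k})$ is exactly what makes this work.

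The only genuine point of care is the preliminary reduction to minimal coset representatives, since the comparability $R_g \asymp L_g^{-1}$ is only asserted for $g\in\TT_p$; after that the proof is pure Euclidean geometry resting on the standing disjointness property of a standard set of horoballs from \cite{Bowditch}. No deeper analytic input (e.g.\ conformal distortion estimates for $G$-action, or the shadow lemma) is required.
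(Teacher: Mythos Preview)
Your argument is correct. The paper does not actually supply a proof of this lemma; it merely records the statement and refers the reader to \cite[\S2.4]{jbgfg}. Your write-up is a clean, self-contained version of what is essentially the standard argument: reduce to minimal coset representatives so that \eqref{comp} applies, convert horoball disjointness into the quadratic inequality $|g(p)-h(q)|^2 \geq 4R_gR_h$ via the centre-distance identity, and finish with $R_g\asymp L_g^{-1}$. You are also right that \eqref{comp} must be read as $R_g \asymp 1-|g(0)| \asymp L_g^{-1}$; the printed ``$\asymp L_g$'' is a typo. The deduction of the ball-disjointness statement from the separation inequality is exactly the intended one, and your bookkeeping with $\sqrt{L_h/L_g}+\sqrt{L_g/L_h}\leq 2\sqrt{k}$ shows transparently why $c_2 = c_1/(2\sqrt{k})$ is the natural choice.
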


\noindent For further details regarding the above notions and statements see any of the papers \cite{jbgfg,melian,Strap} and the references within.
\medskip

We now turn our attention to the situation where the geometrically finite group $G$ has no parabolic elements. Let $\{\eta,\eta'\}$ be the pair of fixed points of a hyperbolic element of $G$. Let $L$ be the axis of the corresponding hyperbolic element of $G$, or equivalently the bi-infinite hyperbolic geodesic connecting $\eta$ with $\eta'$. Let $G_{\eta\eta'}$ denote the stabiliser of $\eta$, which can easily be shown to be equal to the stabiliser of $\eta'$. Then there is a geometrically motivated set $\mathcal{ T}_{\eta \eta'} $ of coset representatives of $G/G_{\eta \eta'}$; chosen so that for all $ g \in \mathcal{ T}_{\eta \eta'}$, the orbit point $g(0)$ lies within a bounded hyperbolic distance from the summit $s_g$ of $g(L)$. Here $g(L)$ denotes the image of $L$ under $g$ and is equal to the bi-infinite geodesic connecting the hyperbolic fixed points $g(y)$ and $g(y')$. The summit $s_g$ is the point on $g(L)$ ``closest'' to the origin, or equivalently the midpoint of $g(L)$ when $g(L)$ is thought of as the arc of a Euclidean circle rather than as a bi-infinite hyperbolic geodesic. Now for $g \in \mathcal{ T}_{y y'}$ and $y\in \{\eta,\eta'\}$, let $H_{g(y)}$ be the horoball with base point at $g(y)$ and radius $R_g := 1 - |s_g| $. Then the top of $H_{g(y)}$ lies within a bounded hyperbolic distance of $g(0)$ and it follows that \eqref{comp} holds for all $g \in \mathcal{ T}_{y y'}$. The following statement is the analogue of Lemma \ref{disjointness} for convex cocompact groups.

\begin{lemma} \label{disjointnessH}
Let $G$ be a nonelementary, geometrically finite Kleinian group without parabolic elements and let $\{\eta,\eta'\}$ be the pair of fixed points of a hyperbolic element of $G$. There is a constant $c_1> 0 $ depending only on $G$ and $\eta,\eta'$ with the following property: for all $u,v \in\{\eta,\eta'\}$ and $g,h \in G$ such that $g(u) \neq h(v)$, we have
\[
| g(u) - h(v)| > \frac{c_1}{\max \{ L_g, L_h\} } \, .
\]
In particular, fix $k > 1$ and suppose that $ k^{n} \le L_g, L_h < k^{n+1}$ for some $n \in \N$. Then
\[
B\big(g(u), c_2/L_g \big) \cap B\big(h(v), c_2/L_h \big) = \emptyset
\]
where $ c_2 := c_1/2 k $.
\end{lemma}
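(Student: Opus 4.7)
The argument parallels the structure of Lemma \ref{disjointness}, with the stronger bound $c_1/\max(L_g,L_h)$ (versus $c_1/\sqrt{L_g L_h}$ in the parabolic case) reflecting the fact that the conformal factor of $g$ at a hyperbolic fixed point satisfies $|g'(u)|_{S^d} \asymp L_g^{-1}$, with no square-root loss.

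First I would reduce to $g, h \in \mathcal{T}_{\eta\eta'}$. Given an arbitrary $g \in G$, let $g' \in \mathcal{T}_{\eta\eta'}$ be the representative of the coset $gG_{\eta\eta'}$. Since $G_{\eta\eta'}$ fixes both $\eta$ and $\eta'$, we have $g'(u) = g(u)$ for $u \in \{\eta,\eta'\}$, while by the construction of $\mathcal{T}_{\eta\eta'}$ (which minimises $L$ on each coset), $L_{g'} \leq L_g$. Hence establishing the bound for coset representatives immediately yields it for arbitrary $g, h \in G$. Without loss of generality assume $L_g \geq L_h$. For $g \in \mathcal{T}_{\eta\eta'}$ the orbit point $g^{-1}(0)$ lies within bounded hyperbolic distance of the summit of $L$, hence at bounded Euclidean distance from the origin, so $|u - g^{-1}(0)| \asymp 1$ and $1-|g(0)| \asymp L_g^{-1}$.

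Next I would invoke the M\"obius distortion formula on $S^d$, namely $|\phi(x)-\phi(y)|^2 = |\phi'(x)|\,|\phi'(y)|\,|x-y|^2$ with $|\phi'(\xi)|_{S^d} = (1-|\phi(0)|^2)/|\xi - \phi^{-1}(0)|^2$. Taking $\phi = g^{-1}$ applied to $g(u), h(v) \in S^d$ and substituting $|g'(u)| \asymp L_g^{-1}$ together with the explicit form of $|g'(h(v))|$, a short computation gives
\[
|g(u)-h(v)| \;\asymp\; |h(v) - g(0)| \cdot |u - g^{-1}h(v)|.
\]
The first factor satisfies $|h(v) - g(0)| \geq 1 - |g(0)| \asymp L_g^{-1} = 1/\max(L_g,L_h)$, so it suffices to show that $|u - g^{-1}h(v)|$ is bounded below by an absolute constant. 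If $g^{-1}h \in G_{\eta\eta'}$ then $g^{-1}h(v) = v$, in which case $|u-v|$ equals either $0$ (the excluded case $g(u)=h(v)$) or the fixed positive constant $|\eta - \eta'|$, and the estimate is automatic.

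The main obstacle is the remaining case $g^{-1}h \notin G_{\eta\eta'}$, where $g^{-1}h(v)$ lies in $\Lambda \setminus \{\eta,\eta'\}$ and could \emph{a priori} accumulate at $u$. To handle this I would invoke the convex cocompactness of $G$: since $C(\Lambda)/G$ is compact and $G$ has no parabolics, the Margulis lemma provides a uniform positive lower bound on the injectivity radius of $\mathcal{H} = B^{d+1}/G$ throughout its convex core. Transferring this bound to the boundary via the M\"obius distortion above (a sequence of pairs $g_n, h_n$ with $|u - g_n^{-1}h_n(v)| \to 0$ would force the axes $g_n(L)$ and $h_n(L)$ to shadow each other arbitrarily deeply into $B^{d+1}$, producing elements of $G$ with arbitrarily small displacement in the convex core and violating discreteness) furnishes the uniform constant $c_1$. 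Once the first displayed inequality is proved, the second is immediate: for $L_g, L_h \in [k^n, k^{n+1})$ one has $\max(L_g,L_h) < k\min(L_g,L_h)$, so the triangle inequality shows that the balls $B(g(u), c_1/(2kL_g))$ and $B(h(v), c_1/(2kL_h))$ are disjoint, giving $c_2 := c_1/(2k)$.
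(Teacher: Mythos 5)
The paper itself does not prove Lemma~\ref{disjointnessH}; it simply cites Patterson \cite[Theorem 7.2]{Paddyrs} and notes that his Fuchsian argument generalises. So your attempt is necessarily being checked against the mathematics rather than against a proof in the text. Your overall architecture (reduce to coset representatives, apply M\"obius distortion via $g^{-1}$, separate the cases $g^{-1}h \in G_{\eta\eta'}$ and $g^{-1}h \notin G_{\eta\eta'}$) is a sensible route. However, there are two genuine gaps.

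First, the assertion that ``for $g \in \mathcal{T}_{\eta\eta'}$ the orbit point $g^{-1}(0)$ lies within bounded hyperbolic distance of the summit of $L$, hence at bounded Euclidean distance from the origin'' is false. Since $g$ is an isometry, $\rho(0, g^{-1}(0)) = \rho(g(0), 0)$, so $|g^{-1}(0)| = |g(0)| \to 1$ as $L_g \to \infty$; thus $g^{-1}(0)$ is \emph{not} uniformly close to the origin (nor to the summit of $L$). The conclusion you want, namely $|u - g^{-1}(0)| \asymp 1$, equivalently $|g'(u)| \asymp L_g^{-1}$, is nonetheless true, but it requires a different argument — for instance, one deduces $|g(u) - g(0)| \asymp L_g^{-1}$ directly from the fact that $g(0)$ lies within bounded hyperbolic distance of the summit $s_g$ of $g(L)$, and $g(u)$ is an endpoint of $g(L)$ at Euclidean distance $\asymp 1 - |s_g| \asymp L_g^{-1}$ from $s_g$; alternatively one uses $|g(\eta) - g(\eta')| \asymp L_g^{-1}$ together with the cross-ratio identity $|g(\eta)-g(\eta')|^2 = |g'(\eta)||g'(\eta')||\eta-\eta'|^2$ and the explicit formula $|g'(u)| = (1-|g^{-1}(0)|^2)/|u - g^{-1}(0)|^2$ to show that the product $|\eta - g^{-1}(0)|\cdot|\eta' - g^{-1}(0)| \asymp 1$, and since each factor is at most $2$, each is bounded below.

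Second, and more seriously, the core of your argument — the claim $|u - g^{-1}h(v)| \gtrsim 1$ when $g^{-1}h \notin G_{\eta\eta'}$ — is handled only by a heuristic appeal to Margulis and ``shadowing,'' and the sketch never uses the WLOG assumption $L_g \geq L_h$. Without that assumption the claim is simply false: since $G(\{\eta,\eta'\})$ is dense in $\Lambda$, taking $g = e$ and letting $h$ range over $\mathcal{T}_{\eta\eta'}$ makes $g^{-1}h(v) = h(v)$ come arbitrarily close to $u$. Combined with the M\"obius identity $|g(u) - h(v)| \asymp |h(v) - g(0)| \cdot |u - g^{-1}h(v)|$ and the bounds $L_g^{-1} \lesssim |h(v) - g(0)| \leq 2$, the statement $|u - g^{-1}h(v)| \gtrsim 1$ (under $L_g \geq L_h$) is in fact equivalent to the lemma you are trying to prove; you have reformulated the problem, not reduced it. Turning the discreteness/compactness heuristic into a proof requires an actual quantitative argument — for example, showing that if $h(v)$ lies within $O(L_g^{-1})$ of $g(0)$ then the geodesics $g(L)$ and $h(L)$, both lifts of a single closed geodesic on the compact quotient, pass within bounded hyperbolic distance of each other near $g(0)$ yet are distinct, so that a uniform transversality constant (coming from compactness of the unit tangent bundle over the convex core, or equivalently from a lower bound on the closed geodesic's length plus the absence of self-tangencies of $\bar L$) separates their endpoints by $\gtrsim L_g^{-1}$ — and it is exactly here that $L_g \geq L_h$ enters, to ensure the relevant segments of both geodesics actually reach the depth of $g(0)$. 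As written, the step is an assertion, not a proof.

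The final derivation of the ball-disjointness statement from the separation inequality is routine and correct.
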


%
%
%
%

%

This lemma was established by Patterson \cite[Theorem 7.2]{Paddyrs}. He dealt only with the Fuchsian case but his proof extends trivially to higher dimensions.

\subsection{Proof of Theorem \ref{singthm} \label{prrofthm1} }

We prove the theorem in the case where $G$ has parabolic elements. The proof in the case where $G$ is without parabolic elements is essentially identical, except that one appeals to Lemma \ref{disjointnessH} rather than Lemma \ref{disjointness}.

Fix $\xi \in \Lambda $. Trivially, if $\xi$ is a parabolic fixed point of $G$ then it is singular. To prove the opposite implication, assume that $\xi $ is singular. Fix $\ep > 0$ small, to be determined later. Then by definition, there exists $N_0$ such that for all $ N \ge N_0$ there exist $p = p_N \in P$, $g = g_N \in G$ so that
\begin{equation}\label{singhyperp}
| \xi - g(p) | \ < \
\frac{\ep}{\sqrt{L_g N}} \qquad {\rm and } \quad
L_g < N. \ \end{equation}
If $g_N(p_N) = g_{2N}(p_{2N})$ for all $N \ge N_0$, then since the right-hand side of the first inequality of \eqref{singhyperp} tends to zero as $ N \to \infty$, we have that $\xi = g_N(p_N)$ for all $N \geq N_0$. In other words, $\xi$ is a parabolic fixed point of $G$ and we are done. Thus, assume that $g_N(p_N) \neq g_{2N}(p_{2N})$ for some $N \ge N_0$. Write $g = g_N$, $p = p_N$, $h = g_{2N}$, and $q = p_{2N}$. Then
\begin{equation}\label{singhyperpp}
| \xi - h(q) | \ < \
\frac{\ep}{\sqrt{L_h \, 2N}} \qquad {\rm and } \quad
L_h < 2 N \ . \end{equation}
It then follows via \eqref{singhyperp}, \eqref{singhyperpp}, and the disjointness lemma (Lemma \ref{disjointness}) that there is a constant $c_1 > 0 $ depending only on $G$ and $P$ so that
\begin{eqnarray*}
\frac{c_1}{\sqrt{L_g L_h}} \, < \, | g(p) - h(q)| & = & | (\xi - h(q)) - (\xi - g(p))| \\
& < & \frac{\ep}{\sqrt{L_h \, 2N}} \, + \, \frac{\ep}{\sqrt{L_g N}} \\ [2ex]
& < & \frac{\ep}{\sqrt{2L_h L_g}} \, + \, \frac{\sqrt2\ep}{\sqrt{L_g L_h}} \\ [2ex]
& < & \frac{3\ep}{\sqrt{L_gL_h}} \, .
\end{eqnarray*}
The upshot is that we obtain a contradiction by setting $ \ep \le c_1/3$. This completes the proof.

\subsection{Proof of Theorem \ref{mainext} }

As before, we prove the theorem in the case where $G$ has parabolic elements. The proof in the case where $G$ is without parabolic elements is essentially identical, except that one appeals to Lemma \ref{disjointnessH} rather than Lemma \ref{disjointness}.

For all $\ep > 0$, we have
\begin{equation} \label{qw}
\psi(r) \, < \, \ep \, r^{-1} \qquad {\rm for \ sufficiently \ large \ } r.
\end{equation}
To see this note that since $\psi$ is decreasing, we have
\begin{equation}
\label{psimon}
\sum_{n/2 < r \leq n }
r^{\alpha -1 } \p(r)^{\alpha} \ \gg \ \sum_{n/2 < r \leq n }
n^{\alpha -1 } \p(n)^{\alpha} \ \asymp \
n^{\alpha } \p(n)^{\alpha} \,
\end{equation}
for every natural number $n$. In view of the convergent sum condition associated with \eqref{ineqmain2}, we have that the left-hand side of the above inequality tends to zero as $n \to \infty$. Hence
\[
n \, \p(n) \to 0 \quad {\ \rm as \ } \quad n \to \infty \,
\]
and \eqref{qw} follows. Also note that the convergent sum condition together with \eqref{psimon} implies that
\begin{equation} \label{qwe}
\sum_{n=1}^{\infty}
\big( 2^n \p(2^n) \big)^{\alpha} \ < \ \infty \, .
\end{equation}
Next, let
\[
W^*_{p}(\psi)
:=\big\{ \xi \in \Lambda: | \xi - g(p) | < \psi(L_g) \, \mbox{ \ for \ i.m.
$g \in \mathcal{ T}_p $} \big\}
\]
and observe that since $\psi$ is monotonic, by the definition of $\TT_p$ we have
\[
W_{p}(\psi) \; = \; W^*_{p}(\psi) \; \cup \; G(p) \, .
\]
The set $ G(p):= \{g(p) : g \in G \} $ is countable and so $ W_{p}(\psi) $ and $W^*_{p}(\psi)$ have the same $\mu$-measure; in particular

\[
\mu ( W_{p}(\psi)) = 0 \quad \Longleftrightarrow \quad \mu (W^*_{p}(\psi)) = 0 \, .
\]
Now for each $n \in \N $, let

\[
A_p(\psi, n) \ := \bigcup_{\substack{g \in \mathcal{ T}_p \, : \\ 2^{n} < \, L_g \leq 2^{n+1} }}
\!\! B\big(g(p),\p( L_g ) \big) \ .
\]
By definition,

\[
W^*_{p}(\psi) = \limsup_{n \to \infty} A_p(\psi, n) \,
\]
and by the Borel-Cantelli Lemma
\begin{equation} \label{the game}
\mu \big(W_{p}(\psi) \big) = \mu \big(W^*_{p}(\psi) \big) = 0 \qquad {\rm if } \qquad \sum_{n=1}^{\infty} \mu ( A_p(\psi, n)) < \infty \, .
\end{equation}
Thus, the name of the game is to show that the above sum converges.

\medskip

In view of \eqref{qw}, for $n $ sufficiently large we can assume that
\[
\psi(n) \, < \, \frac{c_2}{8 \, n} \, .
\]
Here $c_2$ is the absolute constant appearing in Lemma~\ref{disjointness} with $k=2$. It then follows from Lemma~\ref{disjointness} that for $n$ large enough, the union of balls associated with $A_p(\psi, n)$ is a disjoint union and so
\[
\mu \big( A_p(\psi, n) \big) \; = \; \sum_{\substack{g \in \mathcal{ T}_p \, : \\ 2^{n} < \, L_g \leq 2^{n+1} }}
\!\! \mu \Big( B\big(g(p),\p( L_g ) \big) \Big) \ .
\]
The measure $\mu$ is supported on $K$ and so the only balls that can potentially make a positive contribution to the above sum are those that intersect $K$. With this in mind, take such a ball $ B\big(g(p),\p( L_g ) \big)$ and choose a point
\[
\widetilde{g(p)} \in K \cap B\big(g(p),\p( L_g ) \big) \, .
\]
It is easily verified that
\[
B\Big(g(p),\p( L_g ) \Big) \subseteq B\Big(\widetilde{g(p)},2 \p( L_g ) \Big) \subseteq B\Big(\widetilde{g(p)}, \textstyle{\frac{c_2}{ 2 L_g}} \Big) \subseteq B\Big(g(p), \textstyle{\frac{c_2}{ L_g}} \Big) \, .
\]
Since $\mu$ is weakly absolutely $\alpha$-decaying, it follows that for $n$ sufficiently large
\begin{eqnarray} \label{finale}
\mu \big( A_p(\psi, n) \big) \; & \le & \; \sum_{\substack{g \in \mathcal{ T}_p \, : \\ 2^{n} < \, L_g \le
2^{n+1} }}
\!\! \mu \Big( B\big(\widetilde{g(p)}, \; 2 \, \p( L_g ) \big) \Big) \nonumber \\ [2ex]
& = & \; \sum_{\substack{g \in \mathcal{ T}_p \, : \\ 2^{n} < \, L_g \le
2^{n+1} }}
\!\! \mu \Big( B\Big(\widetilde{g(p)}, \; 2 \, \p( L_g ) \, \textstyle{\frac{2c_2 L_g}{2c_2 L_g}} \Big) \Big) \nonumber \\ [2ex]
& \le & \; \sum_{\substack{g \in \mathcal{ T}_p \, : \\ 2^{n} < \, L_g \le
2^{n+1} }}
\!\! C \, \big( 2 \, \p( L_g ) 2 L_g c_2^{-1} \big)^{\alpha} \; \mu \Big( B\Big(\widetilde{g(p)}, \; \, \textstyle{\frac{c_2}{2 \, L_g}} \Big) \Big) \nonumber \\ [2ex]
& \le & \; C \, \big( 8 \, c_2^{-1} \, \p( 2^n ) 2^n \big)^{\alpha} \; \sum_{\substack{g \in \mathcal{ T}_p \, : \\ 2^{n} < \, L_g \le
2^{n+1} }}
\!\! \mu \Big( B\Big(g(p), \; \, \textstyle{\frac{c_2}{ L_g}} \Big) \Big) \, .
\end{eqnarray}
The measure $\mu$ is a probability measure and by Lemma~\ref{disjointness} the balls associated with the above sum are disjoint. Hence
\[
\sum_{\substack{g \in \mathcal{ T}_p \, : \\ 2^{n} < \, L_g \le
2^{n+1} }}
\!\! \mu \Big( B\Big(g(p), \; \, \textstyle{\frac{c_2}{ L_g}} \Big) \Big) \ \le \ 1 \,
\]
which together with \eqref{qwe} and \eqref{finale} implies that
\[
\sum_{n=1}^{\infty} \mu \big( A_p(\psi, n) \big) \ \ll \
\sum_{n=1}^{\infty}
\big( 2^n \p(2^n) \big)^{\alpha} \ < \ \infty \, .
\]
\noindent In view of \eqref{the game}, this completes the proof of Theorem \ref{mainext}.
\medskip


\section{Proof of Theorem \ref{mainbad} \label{proofmainbad}}

Trivially, we have $K \cap \bad_y \subseteq K$. Hence, we immediately obtain the upper bound
\[
\dim (K \cap \bad_y) \le \dim K  = \delta \, .
\]
The usual strategy for proving the complementary lower bound inequality is to show that for all $s < \delta$, there exists a closed ``Cantor-like'' set $F_s \subset K \cap \bad_y$ which supports a probability measure $ \mu_s $ with the property that
\begin{equation}
\label{one_sided_power_law}
\mu_s \big(B(x, r) \big) \ \ll \ C \, r^s \hspace{7mm} \forall \ x \in K
\hspace{5mm} \forall \ r < r_0
\end{equation}
for some constant $C = C_s > 0$. According to the Mass Distribution Principle \cite[\S4.1]{falc}, \eqref{one_sided_power_law} implies that $\dim F_s \ge s$ and thus since $ F_s \subseteq K \cap \bad_y $, we have $ \dim (K \cap \bad_y) \ge s$. Since $s$ can be chosen arbitrarily close to $\delta$, we obtain the desired lower bound $\dim (K \cap \bad_y) \ge \delta$.

The question arises of how to construct the Cantor-like sets $\{F_s : s  < \delta\}$. One could use a ``hands-on'' approach in which there is a series of ``stages'' in the construction of $F_s$ and the proof explicitly describes how to construct each stage from the previous stage. However, these constructions often tend to follow the same sort of pattern: each stage $n \in \N$ corresponds to a set $S_n$ which can be written as the finite union of disjoint balls which are contained in $S_{n-1}$. There are certain ``obstacles'' to be avoided in the construction of the set $S_n$, but other than these obstacles the only relevant consideration is how many disjoint balls of a certain radius can fit into each ball of $S_{n-1}$. The uniformity in these kinds of constructions can be summarised by saying that many of them are instances of a \emph{single common} construction, whose applicability in any given situation can be tested by determining whether the relevant set is ``winning'' in the sense of Schmidt's game, an infinite game introduced by Schmidt in 1966.

Thus, instead of taking the ``hands-on'' approach, we will instead prove that the set $K\cap \bad_y$ is winning for Schmidt's game. It turns out to be most convenient to do this by combining a few results which are already known. Namely, a result of Mayeda $\&$  Merrill \cite{MayedaMerrill} states that the set $\bad_y$ is winning for a different game introduced by McMullen and known as the ``absolute game'', while the results of Broderick, Fishman, Kleinbock, Reich, and Weiss (hereafter abbreviated BFKRW) state that any set winning for the absolute game can be intersected with any sufficiently nice fractal (and in particular any Ahlfors $\delta$-regular fractal) to get a set winning for Schmidt's game (played on that fractal). This immediately implies that $K\cap \bad_y$ is winning for Schmidt's game (played on $K$) and according to a theorem of Fishman \cite{Fishman} this implies the existence of the family of sets $\{F_s : s < \delta \}$ described above, and in particular that $\dim(K\cap \bad_y) \geq \delta $. To make this paper more self-contained, in what follows we give the details behind this argument, as well as recalling the definition of Schmidt's game and the absolute game. Hopefully, our presentation will be accessible to a reader who is not an expert in playing these games and thus provide them with another (more powerful) approach towards proving  statements  such as Theorem \ref{mainbad}.

The games approach has some natural advantages over the hands-on approach. For one thing, the class of absolute winning sets is known to be invariant under quasi-symmetric transformations \cite[Theorem~2.2]{McMullen_absolute_winning}. Hence, if $G$ is of the first kind and $f:S^d \to S^d$ is a quasi-symmetric homeomorphism, then with appropriate modifications the above argument shows that $\dim(K\cap f(\bad_y)) = \delta$. For another thing, the class of absolute winning sets is closed under countable intersections (see \cite[Theorem~2]{Schmidt1} for the idea of the proof of this folklore result), so the above argument can also be modified to show that $\dim(K\cap \bigcap_y \bad_y) = \delta$, where the intersection is taken over all $y$ as in Theorem \ref{mainbad}. The countable intersection property also shows that $\bad_y$ can be intersected with an absolute winning set coming from some other mathematical setup (not necessarily related to Kleinian groups) and the intersection will still be large.

\subsection{Schmidt's game and Fishman's theorem}

We first define Schmidt's game and show that sets winning for Schmidt's game have large Hausdorff dimension.  The simplified account which we are about to present  is sufficient to bring out the main features of the games.

Let $K$ be a closed subset of $\R^d$. For any $0 < \alpha,\beta < 1$, \emph{Schmidt's $(\alpha,\beta)$-game} is an infinite game played by two players, Ayesha and Bhupen, who take turns choosing closed balls in $\R^d$ whose centers lie in $K$, with Bhupen moving first. The players must choose their moves so as to satisfy the relations
\[
B_1 \supset A_1 \supset B_2 \supset \cdots
\]
and
\[
\rho(A_k) = \alpha \rho(B_k) \text{ and } \rho(B_{k + 1}) = \beta \rho(A_k)  \ \text{ for } k\in\N,
\]
where $B_k$ and $A_k$ denote Bhupen's and Ayesha's $k$th moves, respectively, and where $\rho(B)$ denotes the radius of a ball $B$. Since the sets $B_1,B_2,\ldots$ form a nested sequence of nonempty closed sets whose diameters tend to zero, it follows from the completeness of $K$ that the intersection $\bigcap_k B_k$ is a singleton, say
\[
\bigcap_k B_k = \{x_\infty\} \, ,
\]
whose unique member $x_\infty$ lies in $K$. The point $x_\infty$ is called the \emph{outcome} of the game. A set $S \subseteq K$ is said to be \emph{$(\alpha,\beta)$-winning on $K$} if Ayesha has a strategy guaranteeing that the outcome lies in $S$, regardless of the way Bhupen chooses to play. It is said to be \emph{$\alpha$-winning on $K$} if it is $(\alpha,\beta)$-winning on $K$ for every $0 < \beta < 1$, and \emph{winning on $K$} if it is $\alpha$-winning on $K$ for some $0 < \alpha < 1$.  Informally, Bhupen  tries to stay away from the ``target'' set $S$ whilst Ayesha tries to land on $S$.

In view of the fact that $ S $ is a  subset of $K$, we trivially have that
\begin{equation}  \label{genlb}
\dim   S    \,  \le  \, \dim   K    \, .
\end{equation}
Thus, the main substance of the following statement is the complementary lower bound.

\begin{lemma}
\label{lemmafishman}
Let $K\subset \R^d$ be a closed Ahlfors $\delta$-regular set, and let $S \subset K$ be winning on $K$. Then
\[
\dim S  =  \delta.
\]
\end{lemma}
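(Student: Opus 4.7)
The upper bound $\dim S \le \delta$ is immediate from $S\subset K$ and \eqref{genlb}, so the substance of the lemma is the matching lower bound. Following the strategy outlined in Section~\ref{proofmainbad}, for each $s<\delta$ I would construct a compact subset $F_s\subset S$ and a Borel probability measure $\mu_s$ supported on it satisfying the one-sided power law \eqref{one_sided_power_law}; the Mass Distribution Principle then gives $\dim F_s \ge s$, and letting $s\nearrow\delta$ concludes the proof.

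Fix $\alpha\in(0,1)$ such that $S$ is $\alpha$-winning on $K$, and let $\sigma$ denote a corresponding winning strategy for Ayesha. The plan is to play Schmidt's $(\alpha,\beta)$-game in parallel across a branching tree of possible Bhupen moves, where $\beta\in(0,1)$ will be tuned at the end. Start with a fixed ball $B_\emptyset$ of small radius $\rho_0>0$ centred in $K$. At each internal node of the tree, a Bhupen ball $B$ of radius $r$ at depth $n$ is present; let $A:=\sigma(\dots,B)$ be Ayesha's response (radius $\alpha r$), and then use Ahlfors regularity to select a maximal collection of pairwise disjoint closed balls of radius $\alpha\beta r$ whose centres lie in $K\cap A$ and are at distance $\ge \alpha\beta r$ from $\partial A$; declare each of these to be a legal next Bhupen move. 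Every infinite branch corresponds to an honest play against $\sigma$, so its (unique) outcome lies in $S$; let $F_\beta$ be the compact set of all such outcomes.

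The key counting input is an Ahlfors-regular packing estimate: the branching number $M=M_\beta$ at each node satisfies $M\ge c\,\beta^{-\delta}$, where $c>0$ depends only on $\alpha$ and the Ahlfors constants (for radii below some $r_0$). Grant this, and define $\mu_\beta$ by assigning equal mass $M^{-n}$ to each node at depth $n$; this is a consistent Kolmogorov specification and yields a Borel probability measure supported on $F_\beta$. Since the depth-$n$ balls are pairwise disjoint of common radius $r_n:=\rho_0(\alpha\beta)^n$, any Euclidean ball $B(x,r)$ with $r_n\le r<r_{n-1}$ meets a bounded number of them, so
\[
\mu_\beta\bigl(B(x,r)\bigr)\;\ll\;M^{-n}\;\ll\;r^{s_\beta},\qquad s_\beta\;:=\;\frac{\log M}{\log(\alpha\beta)^{-1}}\;=\;\frac{\delta\log\beta^{-1}+O(1)}{\log\alpha^{-1}+\log\beta^{-1}}.
\]
Since $s_\beta\to\delta$ as $\beta\to 0$, given $s<\delta$ we choose $\beta$ small enough that $s_\beta>s$, set $F_s:=F_\beta$ and $\mu_s:=\mu_\beta$, verify \eqref{one_sided_power_law}, and apply the Mass Distribution Principle.

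The step I expect to be the main technical obstacle is the packing estimate $M\gtrsim\beta^{-\delta}$. The content is that, for all sufficiently small $r$, every ball $A$ of radius $\alpha r$ centred in $K$ contains at least $c\,\beta^{-\delta}$ disjoint closed sub-balls of radius $\alpha\beta r$ whose centres lie in $K$ and keep away from $\partial A$. This follows from a standard maximal-separated-set argument using Ahlfors $\delta$-regularity applied to $K\cap A$ (possibly after slightly shrinking to $A'\subset A$ to secure the boundary buffer); the care needed is in verifying that the Ahlfors estimate applies at all scales encountered down the tree, which forces an a priori restriction on $\rho_0$ in terms of the regularity radius $r_0$.
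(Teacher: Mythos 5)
Your proposal is correct and follows essentially the same strategy as the paper: play Schmidt's $(\alpha,\beta)$-game against all of Bhupen's possible moves simultaneously, using Ahlfors $\delta$-regularity at each stage to guarantee a branching number $\gtrsim\beta^{-\delta}$, and then let $\beta\to 0$. The only presentational difference is that the paper packages the dimension computation for the resulting Cantor set as a citation to Beardon's classical theorem, whereas you unpack it directly via the Mass Distribution Principle — the same argument in substance; one should just be careful that ``bounded number'' in your final estimate means bounded independently of $n$ (the implied constant there legitimately depends on $\beta$ and $d$), and that one fixes the branching number to be exactly $M$ (truncating each node's children) so that the Kolmogorov specification is genuinely consistent.
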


\noindent The above lemma was originally proved by Fishman \cite[Theorem 3.1]{Fishman} but shorter proofs have appeared in the literature since then, see for example  \cite[Proposition 2.5]{DFSU_sponges_BA} and  \cite[Lemma 5.8]{LukyanenkoVandehey}. The difference between these proofs and the one appearing below is that the one below emphasises the connection with the ``hands-on'' technique for producing Cantor sets with a certain property.

\begin{proof}[Proof of Lemma \ref{lemmafishman}]
Let $\alpha > 0$ be chosen so that $S$ is $\alpha$-winning, and fix $0 < \beta \leq 1/2$. Fix a winning strategy for Ayesha in the $(\alpha,\beta)$-game on $K$. We will construct a Cantor subset $F_\beta$ of $K$ via a sequence of stages. The stages will have the following properties:
\begin{itemize}
\item[1.] Each stage $n \in \N $ will correspond to a set $F_n \subset \R^d$ which is the union of finitely many disjoint balls centered in $K$. All of these balls will have the same radius $(\alpha\beta)^n \rho_0$, where $\rho_0 > 0$ is a constant, and are separated by distances of at least $(\alpha\beta)^n \rho_0$.
\item[2.] Each of the balls appearing at stage $n$ will be a subset of some ball appearing at stage $n-1$.
\item[3.] Each of the balls appearing at stage $n$ will have exactly $N(\beta) = \lfloor c_\beta \beta^{-\delta}\rfloor$ ``children'' at stage $n+1$ (i.e. balls appearing in the construction of $S_{n+1}$ which are subsets of the ball under consideration). Here $c_\beta> 0$ is a constant depending on $\beta$.
\item[4.] The intersection $F_\beta := \bigcap_{n=1}^\infty F_n$ will be a subset of $S$.
\end{itemize}
It is well known that for such a construction, the Hausdorff dimension of  $F_\beta$ is equal to $\frac{\log N(\beta)}{-\log(\alpha\beta)}$ (see e.g. \cite[Theorem 4]{Beardon}). It then follows that
\[
\dim S  \, \ge \, \dim F_\beta  = \frac{\log N(\beta)}{-\log(\alpha\beta)} = \frac{- \delta \log(\beta) + O(1)}{-\log(\alpha\beta)}  \ \tendsto{\beta\to 0}   \  \delta \, .
\]

\noindent Thus, in view of this and \eqref{genlb},  constructing a sequence of stages satisfying (1)--(4) will complete the proof of the lemma.  With this in mind, let   $F_0$ be any closed ball of radius $\rho_0$ centered in $K$. Now suppose that we have constructed the sets $F_0,\ldots,F_n$, and we want to construct the set $F_{n + 1}$. Fix a ball $B_n \subset F_n$. We need to specify what the ``children'' of $B_n$ are. By construction, there is a sequence of nested balls $B_0 \supset \cdots \supset B_n$ appearing in the construction so far. We will think of these balls as possible moves for Bhupen in Schmidt's $(\alpha,\beta)$-game on $K$. If Bhupen makes these moves, then Ayesha's winning strategy produces a response $A_n \subset B_n$ of radius $\alpha (\alpha\beta)^n \rho_0$. Now let $\rho_{n + 1} = \beta \rho(A_n) = (\alpha\beta)^{n + 1} \rho_0$, and let $\{B(x_i,\rho_{n + 1}):i = 1,\ldots,N\}$ be a maximal disjoint collection of balls in $A_n$ separated by distances of at least $\rho_{n + 1}$. Then the balls $\{B(x_i,4\rho_{n + 1}):i = 1,\ldots,N\}$ form a cover of $A_n$, so since $K$ is Ahlfors $\delta$-regular, we have $N \geq N(\beta) = \lfloor c_\beta \beta^{-\delta }\rfloor$. We consider the balls $\{B(x_i,\rho_{n + 1}): i = 1,\ldots,N(\beta)\}$ to be the children of $B_n$, since they could be used as legal moves for Bhupen in response to Ayesha's move $A_n$.

It is easy to check that (1)--(3) hold. To check that (4) holds, fix $x_\infty \in F_\beta$ and note that there exists an infinite nested sequence of balls $B_0 \supset B_1 \supset \cdots$ appearing in the construction whose point of intersection is $x_\infty$. This sequence corresponds to a possible strategy that Bhupen could use against Ayesha's winning strategy, so by the definition of a winning strategy, we have $x_\infty \in S$.
\end{proof}

\subsection{The absolute game and intersections with fractals}

We define the absolute game as introduced by McMullen in \cite{McMullen_absolute_winning} and show that any absolute winning set is winning (for Schmidt's game) on Ahlfors $\delta$-regular sets.

Let $\Lambda\subset \R^d$ be a closed set. For each $0 < \beta < 1$, the \emph{absolute $\beta$-game on $\Lambda$} is an infinite game played by two players, Ayesha and Bhupen, who take turns choosing balls in $\R^d$ with centers in $\Lambda$, with Bhupen moving first. The players must choose their moves so as to satisfy the relations
\begin{equation}
\label{absolute}
B_{k + 1} \subset B_k \setminus A_k
\end{equation}
and
\[
\rho(A_k) = \beta \rho(B_k) \text{ and } \rho(B_{k + 1}) = \beta \rho(A_k)   \ \text{ for } k\in\N,
\]
 where we use the same notation as when defining winning on $K$. Due to condition \eqref{absolute} we think of Ayesha as ``deleting'' her chosen ball $A_k$, whereas Bhupen is thought of as ``moving into'' his choice $B_k$. As before, the completeness of $\Lambda$ implies that the intersection $\bigcap_k B_k$ is a singleton, say $\bigcap_k B_k = \{x_\infty\}$, and the point $x_\infty\in \Lambda$ is called the \emph{outcome} of the game. A set $S \subseteq \Lambda$ is said to be \emph{absolute $\beta$-winning on $\Lambda$} if Ayesha has a strategy guaranteeing that the outcome lies in $S$, regardless of the way Bhupen chooses to play. It is said to be \emph{absolute winning on $\Lambda$} if it is $\beta$-winning for every $0 < \beta < 1$.

The following result regarding absolute winning sets is essentially a direct consequence of  \cite[Proposition~4.7]{BFKRW}.

\begin{lemma}
\label{lemmaBFKRW}
Let $S$ be an absolute winning set on a closed set $\Lambda\subset \R^d$, and let $K \subset \Lambda$ be a closed Ahlfors $\delta$-regular set. Then $K\cap S$ is winning on $K$.
\end{lemma}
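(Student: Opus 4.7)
The plan is to translate Ayesha's winning strategy in the absolute game on $\Lambda$ into a winning strategy for her in Schmidt's game on $K$, so that the outcome lies in $K \cap S$. Since every ball ``seen'' in Schmidt's game on $K$ is automatically a legal ball in the absolute game on $\Lambda$ (as $K \subset \Lambda$), the game-theoretic difficulty is only that in Schmidt's game Ayesha must \emph{choose} (rather than delete) her ball, and that choice must be centered in $K$. Ahlfors regularity is what resolves this difficulty.

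First I would fix a small $\beta_0 \in (0,1)$ and let $\sigma$ be Ayesha's winning strategy in the absolute $\beta_0$-game on $\Lambda$ targeting $S$. Next I would choose $\alpha \in (0,1)$ small enough that $C^2(\beta_0 + 2\alpha)^\delta < 1$, where $C$ is the Ahlfors constant of $K$, and claim that $K \cap S$ is $(\alpha,\beta)$-winning on $K$ for every $\beta \in (0,1)$. To build Ayesha's Schmidt strategy inductively, suppose Bhupen has just played $B_k$ (radius $r_k$, centered in $K$). Feed $B_k$ to $\sigma$ as Bhupen's $k$-th move in the absolute game; $\sigma$ returns a ball $A_k^{\mathrm{abs}}$ of radius $\beta_0 r_k$ to be deleted. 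Ayesha's Schmidt response will be $A_k := B(x_k, \alpha r_k) \subset B_k$ for some $x_k \in K$ with $\mathrm{dist}(x_k, A_k^{\mathrm{abs}}) \ge 2\alpha r_k$, so that $A_k$ and $A_k^{\mathrm{abs}}$ are disjoint. Then Bhupen's next Schmidt move $B_{k+1} \subset A_k$ lies in $B_k \setminus A_k^{\mathrm{abs}}$ and (after a harmless shrinking to the exact radius required by the absolute game, which preserves validity of $\sigma$) qualifies as his next absolute move, and the construction continues.

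The main obstacle is to verify that such an $x_k$ always exists, and this is exactly where Ahlfors regularity does the work. Letting $\mu$ denote the Ahlfors $\delta$-regular measure on $K$, we have $\mu(K \cap B_k) \ge C^{-1} r_k^\delta$ while $\mu\bigl(K \cap B(A_k^{\mathrm{abs}}, 2\alpha r_k)\bigr) \le C (\beta_0 + 2\alpha)^\delta r_k^\delta$; the choice of $\alpha$ forces the right-hand side to be strictly smaller than the left-hand side, guaranteeing the existence of the required $x_k$. A separate minor point is to check that the radius $\alpha r_k$ stays above the Ahlfors threshold $r_0$ eventually; but since both inequalities in Ahlfors regularity hold for radii below $r_0$, this is automatic once $r_0$ is small enough, and in any case only finitely many rounds are affected, which cannot change the winning status.

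Finally, the outcome $x_\infty = \bigcap_k B_k$ of the Schmidt game coincides with the outcome of the simulated absolute play against $\sigma$, hence lies in $S$; and since each $B_k$ is centered in the closed set $K$ with radii shrinking to zero, also $x_\infty \in K$. Therefore $x_\infty \in K \cap S$, which proves that $K \cap S$ is $\alpha$-winning on $K$, as required.
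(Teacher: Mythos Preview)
Your overall strategy is the same as the paper's --- simulate Ayesha's absolute-game play inside Schmidt's game on $K$, and use Ahlfors regularity to produce a legal Schmidt move that avoids the deleted ball. The paper carries out the avoidance via a ``diffuseness'' consequence of regularity (every $K$-centred ball of radius $\rho$ contains two $K$-centred sub-balls of radius $\alpha\rho$ separated by at least $2\alpha\rho$, so the deleted ball, having diameter $<2\alpha\rho$, misses at least one of them), whereas you use a direct $\mu$-measure comparison; either mechanism is fine.

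The gap is in the synchronisation of radii. You fix the absolute-game parameter $\beta_0$ and then $\alpha$ \emph{before} letting $\beta$ range over $(0,1)$, and assert that Bhupen's Schmidt ball $B_{k+1}$ (radius $\alpha\beta r_k$) becomes a legal next move in the absolute $\beta_0$-game ``after a harmless shrinking''. But in that game Bhupen's next ball must have radius $\beta_0^2 r_k$ (or $\geq \beta_0 r_k$ in McMullen's inequality formulation); once $\beta$ is small enough that $\alpha\beta<\beta_0^2$ (respectively $<\beta_0$), the Schmidt ball is \emph{too small}, and you would need to enlarge it rather than shrink it --- which may push it back into $A_k^{\mathrm{abs}}$ and in any case is not what ``shrinking'' means. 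The paper avoids this by reversing the quantifiers: choose $\alpha$ first from Ahlfors regularity alone, and then for each $\beta$ invoke absolute winning at the parameter $\alpha\beta$, so that Bhupen's Schmidt radius $\alpha\beta r_k$ is exactly what the simulated absolute game requires. Two minor points also deserve a sentence each once the main issue is repaired: the centre of $A_k^{\mathrm{abs}}$ lies in $\Lambda$ but not necessarily in $K$, so your Ahlfors upper bound needs a nearest-point-of-$K$ adjustment; and you must arrange $B(x_k,\alpha r_k)\subset B_k$, not merely $x_k\in K\cap B_k$.
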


\noindent Note that there is a technicality in relating the statement of \cite[Proposition~4.7]{BFKRW} to the above lemma.  Namely, the hypothesis of \cite[Proposition~4.7]{BFKRW} requires that $K$ is ``zero-dimensionally diffuse'' (cf. \cite[Definition~4.2]{BFKRW}) rather than Ahlfors $\delta$-regular. But it is easily verified that every Ahlfors $\delta$-regular set is zero-dimensionally diffuse.\footnote{In fact, it can be  verified that the class of zero-dimensionally diffuse sets is exactly equal to the class of uniformly perfect sets.} For the sake of clarity and self containment, we include a short proof of the lemma.

\begin{proof}[Proof of Lemma \ref{lemmaBFKRW}]
Since $K$ is Ahlfors $\delta$-regular, there exists $\alpha > 0$ with the following property: every ball $B(x,\rho)$ such that $x\in K$ and $\rho \leq 1$ contains two disjoint balls of radius $\alpha\rho$ centered on $K$ and separated by a distance of at least $2\alpha\rho$. Fix $0 < \beta < 1$. We claim that $K\cap S$ is winning for Schmidt's $(\alpha,\beta)$-game on $K$. Indeed, we know that there is a winning strategy for Ayesha in the absolute $\alpha\beta$-game on $\Lambda$: she responds to each of Bhupen's moves $B_k$ by ``deleting'' a ball $A_k^{(0)}$ of size $\alpha\beta\rho_k$, where $\rho_k$ is the radius of $B_k$. By the definition of $\alpha$, there exist two disjoint balls $A_k^{(1)},A_k^{(2)} \subset B_k$ of radius $\alpha\rho_k$ centered on $K$ and separated by a distance of at least $2\alpha\rho$. Since the diameter of $A_k^{(0)}$ is strictly less than $2\alpha\rho$, it intersects at most one of the balls $A_k^{(1)},A_k^{(2)}$. Ayesha's strategy for Schmidt's $(\alpha,\beta)$-game on $K$ is then simply to choose the other one of these balls, or to choose arbitrarily between the balls $A_k^{(1)},A_k^{(2)}$ if $A_k^{(0)}$ does not intersect either of them. This is a legal move within the setup of  Schmidt's game and thus Bhupen must respond by making a move of radius $\alpha\beta\rho_k$ centered in $K$. But since $K \subset \Lambda$, this move is centered in $\Lambda$ and thus corresponds to a legal next move in the absolute $\alpha\beta$-game on $\Lambda$. Thus both games can continue in the same manner, yielding the same outcome. Since Ayesha's strategy in the absolute game guaranteed that the outcome is in $S$, the same is true for her new strategy in Schmidt's game.
\end{proof}

\subsection{$\bad_y$ is absolute winning and the finale}

We first show that the set $\bad_y$ is absolute winning. This  together  with Lemmas~\ref{lemmafishman} and \ref{lemmaBFKRW}  will enable us  to easily deduce the conclusion of Theorem \ref{mainbad}.

\begin{lemma}
\label{lemmaMM}
Let $G$ be a nonelementary, geometrically finite group, and let $y$ be a parabolic fixed point of $G$, if one exists, and a hyperbolic fixed point otherwise. Then the set $\bad_y$ is absolute winning on $\Lambda$, the limit set  of $G$.
\end{lemma}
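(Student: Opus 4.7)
My plan is to exhibit an explicit winning strategy for Ayesha in the absolute $\beta$-game on $\Lambda$, for every $\beta\in(0,1)$, that forces the outcome into $\bad_y$. I will treat the case where $G$ has parabolic elements and $y\in P$; the convex cocompact case is identical on replacing Lemma~\ref{disjointness} by Lemma~\ref{disjointnessH}. Fix $\beta\in(0,1)$ and a constant $c=c(\beta,G,y)>0$ to be pinned down below. The strategy will guarantee that the outcome $\xi$ satisfies $|\xi-g(y)|\ge c/L_g$ for every $g\in\mathcal{T}_y$, which is precisely the condition $\xi\in\bad_y$.

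The idea is a \emph{scale-matched deletion}. Let $\rho_k$ be the radius of Bhupen's $k$th move $B_k$, so $\rho_k=\beta^{2k}\rho_0$. For each $g\in\mathcal{T}_y$ with $L_g$ above a fixed threshold, there is a unique stage $k=k(g)$ at which $c/L_g$ falls in the window $[\beta^{3}\rho_k,\beta\rho_k)$; consecutive stages' windows tile the $L_g$-axis since $\beta^{3}\rho_k=\beta\rho_{k+1}$. In particular $c/L_g\le\beta\rho_k$, so the ``forbidden ball'' $B(g(y),c/L_g)$ fits inside a move of radius $\beta\rho_k$. The finitely many exceptional $g$ with $L_g$ below the threshold can be absorbed in the first few rounds, for instance by shrinking $\rho_0$ so that $B_0$ avoids them. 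At stage $k$, Ayesha's move is: if some $g$ with $k(g)=k$ exists such that $B(g(y),c/L_g)$ meets $B_k$, play $A_k:=B(g(y),\beta\rho_k)$; otherwise play any admissible ball. Since $g(y)\in\Lambda$ and $c/L_g\le\beta\rho_k$, the move is admissible and swallows the forbidden ball. The rules then force $B_{k+j}\subset B_k\setminus A_k$ for every $j\ge1$, so the outcome $\xi$ is automatically at least $c/L_g$ away from $g(y)$.

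The main point to verify, and the central technical obstacle, is that at each stage there is \emph{at most one} such threat. If two distinct representatives $g,h$ with $k(g)=k(h)=k$ both had forbidden balls meeting $B_k$, their base points would satisfy $|g(y)-h(y)|\le 4\rho_k$. On the other hand Lemma~\ref{disjointness} gives the unconditional separation $|g(y)-h(y)|>c_1/\sqrt{L_gL_h}$, and the scale assignment forces $\sqrt{L_gL_h}\le c/(\beta^{3}\rho_k)$, so $|g(y)-h(y)|>c_1\beta^{3}\rho_k/c$. These two estimates are incompatible once $c<c_1\beta^{3}/4$, and this quantitative calibration of $c=c(\beta)$ is what the proof hinges on.

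With uniqueness of threats in hand, every $g\in\mathcal{T}_y$ is disposed of at its designated stage, so the outcome satisfies the badness condition with constant $c(\beta)$; hence $\bad_y$ is absolute $\beta$-winning for every $\beta\in(0,1)$, i.e.\ absolute winning on $\Lambda$. In the convex cocompact case Lemma~\ref{disjointnessH} provides the stronger separation $|g(u)-h(v)|>c_1/\max(L_g,L_h)$, improving the calibration accordingly; one applies the same scheme and deals with the two fixed points $\eta,\eta'$ either by interleaving them on alternating stages or by shrinking $c$ by a further factor of $2$ so that two threats (one for each of $\eta,\eta'$) can be handled together within a single refined dyadic window.
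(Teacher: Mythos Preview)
Your proof is correct and follows essentially the same strategy as the paper's: at each stage, use the separation lemma (Lemma~\ref{disjointness} or~\ref{disjointnessH}) to show that at most one orbit point $g(y)$ at the relevant scale threatens Bhupen's current ball, and have Ayesha delete a $\beta\rho_k$--ball centred at that point. Two minor remarks: Ayesha does not control $\rho_0$ (Bhupen moves first), but the finitely many below--threshold $g$ are harmless since $\bad_y$ only demands \emph{some} positive constant $c(\xi)$; and in the convex cocompact case there is no need to interleave $\eta$ and $\eta'$, since $\bad_y$ involves only the single chosen $y$ and Lemma~\ref{disjointnessH} with $u=v=y$ already gives the required separation.
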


%
%

 The case of parabolic fixed points was proven in \cite{MayedaMerrill}.  The proof is not particularly long and so for the sake of clarity and self containment, we include a proof which also covers the case of hyperbolic fixed points.

\begin{proof}[Proof of Lemma \ref{lemmaMM}]

Fix $0 < \beta < 1$. We specify Ayesha's strategy for the absolute $\beta$-game by describing how she would react to any ball $B = B(x,\rho)$ that Bhupen could choose. Let $k = \beta^{-1}$ and let $c_3 = c_1/4k$, where $c_1 > 0$ is as in Lemma~\ref{disjointness} (in the case where $y$ is parabolic) or Lemma~\ref{disjointnessH} (in the case where $y$ is hyperbolic). Then for all $g,h\in G$ such that $g(y) \neq h(y)$ and $k^n \leq L_g,L_h < k^{n + 1}$, the distance between the balls $B(g(y),c_3/L_g)$ and $B(h(y),c_3/L_h)$ is at least $c_1/2k^{n + 1}$, since
\[
|g(y) - h(y)| - \frac{c_3}{L_g} - \frac{c_3}{L_h} \geq \frac{c_1}{k^{n+1}} - \frac{c_1/4}{k^{n+1}} - \frac{c_1/4}{k^{n+1}} = \frac{c_1/2}{k^{n+1}} \cdot
\]
Now let $n$ be chosen so that $c_1/2k^{n + 2} \leq 2\rho < c_1/2k^{n + 1}$. Then we have shown that at most one ball of the form $B(g(y),c_3/L_g)$, $k^n \leq L_g < k^{n + 1}$, intersects Bhupen's ball $B(x,\rho)$. Ayesha's strategy can now be given as follows: ``delete'' the ball $B(g(y),\beta\rho)$, where $g\in G$ is chosen so that $B(g(y),c_3/L_g)$ intersects $B(x,\rho)$, if possible, and arbitrarily otherwise.

To complete the proof, we must show that this strategy guarantees that the outcome will lie in $\bad_y$. Indeed, as usual let $x_\infty$ denote the outcome, and consider an element $g\in G$. Then we have $k^n \leq L_g < k^{n + 1}$ for some $n$, and if $L_g$ is sufficiently large then there must have occurred some stage in the game where the value of $n$ appearing in the previous paragraph is the same as the value of $n$ we are interested in. In this stage, Bhupen chose a ball $B = B(x,\rho)$ and Ayesha deleted a ball $B(h(y),\beta\rho)$. By the definition of the absolute game, the outcome $x_\infty$ must lie in the set $B(x,\rho) \setminus B(h(y),\beta\rho)$.

We must consider two cases $g(y) = h(y)$ and $g(y) \neq h(y)$. In the first case, since $x_\infty \notin B(h(y),\beta\rho)$ we have
\[
|x_\infty - g(y)| \geq \beta\rho \geq \frac{\beta c_1}{4 k^{n + 2}} \geq \frac{\beta c_1}{4 k^2 L_g} \; ,
\]
and in the second case, since $x_\infty \in B(x,\rho)$, we have $x_\infty \notin B(g(y),c_3/L_g)$ and thus
\[
|x_\infty - g(y)| \geq \frac{c_3}{L_g} \; .
\]
This completes the proof of the lemma.
\end{proof}

By combining Lemmas \ref{lemmafishman}, \ref{lemmaBFKRW} and \ref{lemmaMM}, we can prove Theorem \ref{mainbad}.

\begin{proof}[Proof of Theorem \ref{mainbad}]
By Lemma~\ref{lemmaMM}, the set $\bad_y$ is absolute winning on $\Lambda$. Thus by Lemma~\ref{lemmaBFKRW}, the set $K\cap \bad_y$ is winning on $K$. Finally, since $K$ is an Ahlfors $\delta$-regular set, Lemma~\ref{lemmafishman} implies that $\dim(K\cap \bad_y) = \delta$, as desired.
\end{proof}

\section{The Dream Theorem for Kleinian Groups \label{dreamG}}

We now turn our attention towards the problem of developing a ``manifold'' theory for Diophantine approximation on Kleinian groups beyond the extremal theory of \S\ref{KTTsec}. Namely, it would be desirable to establish the following analogue of Conjecture~1 for Kleinian groups.   To the best of our knowledge,  nothing to date has been formulated in this direction.\vspace*{2ex}

\begin{thmenv}{Conjecture 3 (The Dream Theorem for Kleinian Groups)}
Let $G$ be a nonelementary, geometrically finite Kleinian group of the first kind acting on $B^{d + 1}$ and let $y$ be a parabolic fixed point of $G$, if there are any, and a hyperbolic fixed point otherwise. Let $\cM \subseteq \Lambda = S^d$ be a nondegenerate manifold. Then
\begin{equation}\label{vb59}
| \cM\cap W_y(\psi) |_{\cM}
=\left\{
\begin{array}{ll}
0 & \textup{if }
\sum_{r=1}^\infty \; \psi\left(r\right)^{d} r^{d - 1}
<\infty\; ,\\ [3ex]
1 & \textup{if }
\sum_{r=1}^\infty \; \psi\left(r\right)^{d} r^{d - 1}
=\infty \; .
\end{array}
\right.
\end{equation}
\end{thmenv}

\vspace*{2ex}

The convergence/divergence criterion appearing in \eqref{vb59}  is derived from the statement of Theorem KT.  In fact, in view of  Theorem KT, Conjecture 3 can be thought of as asserting that a typical point on a nondegenerate manifold $\cM \subset S^d$ has ``the same Diophantine properties'' as a typical point on $S^d$.
In the classical setup of Conjecture~1, the case of nondegenerate analytic planar curves is the ``easiest'' to handle. The natural analogue of this case within the Kleinian group setup   is the case of nondegenerate analytic curves in $S^2$. We therefore also record the following weaker conjecture. In short, it corresponds to Conjecture 3 with $d=2$ and an analyticity assumption.

\vspace*{2ex}

\begin{thmenv}{Conjecture 3B}
Let $G$ be a nonelementary, geometrically finite Kleinian group of the first kind acting on $B^3$ and let $y$ be a parabolic fixed point of $G$, if there are any, and a hyperbolic fixed point otherwise. Let $\cC \subseteq \Lambda = S^2$ be a nondegenerate connected analytic curve. Then
\begin{equation}\label{3prime}
| \cC\cap W_y(\psi) |_{\cC}
=\left\{
\begin{array}{ll}
0 & \textup{if }
\sum_{r=1}^\infty \; \psi\left(r\right)^{2} \; r
<\infty\; ,\\ [3ex]
1 & \textup{if }
\sum_{r=1}^\infty \; \psi\left(r\right)^{2} \; r
=\infty \; .
\end{array}
\right.
\end{equation}
\end{thmenv}

\vspace*{2ex}

\noindent A connected analytic manifold is nondegenerate if and only if it is not contained in any hyperplane. Thus in the statement of Conjecture 3B, the phrase ``nondegenerate connected analytic curve'' could be replaced with the phrase ``connected analytic curve not contained in any hyperplane of $\R^3$'' without changing the meaning of the conjecture.

\vspace*{2ex}
The condition that the manifold is nondegenerate is a necessary assumption in both conjectures. It naturally excludes situations of the following type for which the conclusion of the conjectures is clearly false. Given a group $G$, let us write $ W_y(\psi, G) $ for $ W_y(\psi) $ to emphasise the fact that (by definition) the set of $\psi$-well approximable limit points depends on the group $G$ under consideration. Now suppose there exists a geometrically finite subgroup $H$ of $G$ with parabolic elements preserving the subspace $B^2\times\{0\} \subset B^3$ with limit set $ \Lambda(H)= S^1\times \{0\} \subset S^2$. Let $p$ be a parabolic fixed point of $H$, which is then also a parabolic fixed point of $G$. Now with Conjecture 3B in mind, let $ \cC = S^1 \times \{0\} $. Then $\cC$ is a connected analytic curve which is degenerate (since $\cC \subset \R^2\times\{0\}$). It follows directly from the definition that
\[
W_p(\psi,H) \subset \cC\cap W_p(\psi,G).
\]
In fact, if $\psi$ decays fast enough then the reverse inclusion also holds (so that the two sets are equal), but we will not prove this fact here. So we have
\[
|\cC\cap W_p(\psi,G)|_\cC \geq |W_p(\psi,H)|_\cC
\]
and thus by Theorem KT applied to $H$,
\[
|\cC\cap W_p(\psi,G)|_\cC = |W_p(\psi,H)|_\cC = 1  \  \textup{ if }
  \   \sum_{r=1}^\infty \; \psi(r) =\infty \; .
\]
If we let $\psi(r) = (r\log(r))^{-1}$, then this shows that $|\cC\cap W_p(\psi,G)|_\cC = 1$, while direct calculation shows that $\sum_{r=1}^\infty \; \psi^2(r) r \; <\infty$. This means that the conclusion of Conjecture~3B is not valid for the group $G$ and the curve $\cC$.

Although there are many well-known methods for constructing a geometrically finite group $G$ of the first kind which admits a subgroup $H$ as described above, we list one for concreteness: the integer Lorentz group $G = SO(3,1;\Z) = SO(3,1)\cap SL_4(\Z)$ can be viewed as acting on the space
\[
\Half^3 = \{x\in \R^4 : -x_0^2 + x_1^2 + x_2^2 + x_3^2 = -1, \; x_0 > 0\},
\]
which is the hyperboloid model of three-dimensional hyperbolic space. It is a geometrically finite group of the first kind. The subgroup $H = {\rm Stab}(G;\{x_3 = 0\}) = SO(2,1;\Z) \oplus \{1\}$ is also geometrically finite (with parabolic elements), and if we conjugate from the hyperboloid model to the unit ball model then it preserves the subspace $B^2\times\{0\}$ and has $S^1\times\{0\}$ as its limit set.

\vspace*{2ex}

%
%
%
\vspace*{2ex}

\begin{rem}
Note that for the coarser extremal theory described in  $\S\ref{KTTsec} $, all that is required is that the compact subset $K$ of the limit set $\Lambda$ supports a weakly absolutely $\alpha$-decaying  measure.  Indeed, if we take  $K$ to be a submanifold  $\cM$ of $S^d$ and $\mu$ to be the Lebesgue measure on $\cM$, then it is easily verified  that $\mu$ is weakly absolutely $\alpha$-decaying with $\alpha =k:= \dim \cM$. Thus in this scenario, Theorem \ref{mainext} implies that
\[
| \cM \cap W_{y}(\psi) |_{\cM} = 0\hspace{6mm} {\rm if \ } \hspace{6mm} \sum_{r=1}^{\infty}
\p(r)^{k} r^{k  -1 } \ < \ \infty \ .
\]
However, since $k< d$, this falls short of the desired convergence-case statement associated with \eqref{vb59}.
\end{rem}

\vspace*{2ex}

\begin{rem}
Although we could have stated Conjecture 3 without making the assumption that $G$ is of the first kind, we would still have to assume at least that $|\cM \cap \Lambda|_{\cM} > 0$, since otherwise  we would have $|\cM \cap W_y(\psi)|_{\cM} = 0$ regardless of what $\psi$ is. Thus,  if $G$ is of the second kind it is not clear whether the resulting conjecture would be non-vacuous, since many groups of the second kind have totally disconnected limit sets. Even if the limit set $\Lambda$ is not totally disconnected, it is not clear whether or not it can contain a nondegenerate manifold. This in fact leads to another problem which as far as we know is open.
\begin{question}
Does there exist a  geometrically finite group of the second kind $G$ acting on $B^{d + 1}$ such that for some nondegenerate manifold $\cM \subset S^d$, we have that $$|\cM \cap \Lambda|_{\cM} > 0  \, ?$$
\end{question}
\noindent It  is easy to come up with examples where the limit set contains a degenerate  manifold.  \end{rem}

\subsection{Counting orbit points close to manifolds}
At its core, Conjecture 3 is a claim regarding the distribution of orbit points $g(y) $ ``close'' to the manifold  $\cM$. The following discussion brings this out to the forefront. For ease of exposition we restrict our attention to the case $d=2$, i.e. Conjecture 3B. Given a point $ \xi \in \Lambda = S^2$ and a set
$A \subseteq S^2$, let
\[
\dist(\xi , A ) := \inf\{| \xi - a | \, : \, a \in A \} \, .
\]
Now fix $\xi \in \cC \cap W_y(\psi) $. Then by definition, there
exist infinitely many $g\in G$ such that
\[
\dist(g(y) , \cC ) \le | \xi - g(y) | < \psi(L_g) \, .
\]
This means that the orbit points of interest $g(y)$ must lie in the $\psi(L_g)$--neighbourhood of $\cC$. In particular, since $\psi$ is decreasing, for each integer $k \ge 2$, the points of interest $ g(y)$ with $ k^{n} < L_g \le k^{n+1} $ are contained in the $\psi(k^{n})$--neighbourhood of $\cC$. Let us denote this neighbourhood (as a subset of $S^2$) by $\Delta^{\cC}_y (n,\psi)$, and let $N_y^{\cC} (n,\psi)$ denote the set of points $g(y) $ with $ k^{n} < L_g \le k^{n+1} $ contained in $\Delta_y^{\cC}(n,\psi)$. In other words,
\begin{equation*}\label{vb1}
N_y^{\cC}(n,\psi) := \left\{ g(y) \, : \, g \in G \ {\rm \ with \ } \ k^{n} < L_g \le k^{n+1} \ {\rm \ and \ } \ \dist(g(y) , \cC ) \le \psi(k^{n}) \right\} \, .
\end{equation*}
Regarding the
$2$-dimensional Lebesgue measure $m_2$ of the neighbourhood $\Delta_y^{\cC}(n,\psi)$, we have that
\[
m_2\big( \Delta_y^{\cC}(n,\psi) \big) \ \asymp \ \psi(k^{n})
\,.
\]
Now let
\[
A_y(n):= \left\{ g(y) \, : \, g \in G \ {\rm \ with \ } \ k^{n} < L_g \le k^{n+1} \right\} \, .
\]
It is well known, see for example \cite[\S3]{melian} or \cite[\S4.1]{Strap}, that
\[
\# A_y(n) \ \asymp \ (k^n)^2 \, .
\]
Now, if we assume that the points in $ A_y(n) $ are ``fairly''
distributed within $S^2$, we would expect that
\begin{equation*}\label{vb1+}
\# \{ A_y(n) \cap \Delta_y^{\cC}(n,\psi) \} \ \asymp \ \# A_y(n) \times m_2\big( \Delta_y^{\cC}(n,\psi) \big) \, .
\end{equation*}

\noindent The upshot is the following heuristic estimate:
\begin{equation}\label{x1}
\# N_y^{\cC}(n,\psi) \ \asymp \ k^{2n} \, \psi(k^{n}) \,.
\end{equation}
Establishing this heuristic estimate would be a major first step towards Conjecture 3. Indeed, it is relatively straightforward to show that establishing the upper bound
\[
\# N_y^{\cC}(n,\psi) \ \ll \ k^{2n} \, \psi(k^{n})
\]
would already imply the convergence case of Conjecture 3. The corresponding lower bound is not by itself enough to prove the divergence case. Loosely speaking, we would also need to know that the points associated with the set $ N_y^{\cC}(n,\psi)$ are ``ubiquitous'' within $\Delta_y^{\cC}(n,\psi)$   -- see   \cite{memoirs,Beresnevich-Dickinson-Velani-07:MR2373145}.

\vspace*{2ex}

\subsection{The logarithm law for manifolds  \label{dreamGs} } For the sake of simplicity, let $G$ be a nonelementary, geometrically finite Kleinian group of the first kind acting on $B^{d+1}$. Suppose that $G$ has parabolic elements and as usual let $P$ denote a complete set of parabolic fixed points inequivalent under $G$. Then the associated hyperbolic manifold $\cH = B^{d+1} /G $ consists of a compact part $ X_0 $ with a
finite number of attachments:
\[
\cH = X_0 \; \; \cup \; \; \bigcup_{p \in P} Y_p
\]
where each $p \in P$ determines an exponentially ``thinning'' end $
Y_{p} $ -- usually referred to as a cuspidal end -- attached to
$X_0$. We shall write $0$ for the projection of the origin in $B^{d+1}$
to the quotient space $\cal H$. Let $S^d$ be the unit sphere of
the tangent space to $\cal H$ at $0$, and for every vector $v$ in
$S^d$ let $\gamma_v$ be the geodesic emanating from $0$ in the
direction $v$. Furthermore, for each $t \in \R^+$, let $ \gamma_v (t)
$ denote the point achieved after traveling time $t$ along
$\gamma_v$. Now fix $p\in P$. We define a function
\begin{eqnarray*}
\pen_{p}: \cH & \to & \R^+\\
x &\mapsto &\left\{\begin{array}{ll}
0 & x\notin Y_{p}\\[1ex]
\dist(x,0) & x\in Y_{p},
\end{array}\right.
\end{eqnarray*}
where $\dist$ is the induced metric on $\cal M$.  This is the penetration of $x$ into the cuspidal end $Y_{p}$. A relatively simple argument (see \cite{MP,SDS,slv3}) shows that there is a precise correspondence between the excursion pattern of a random geodesic into a cuspidal end $Y_p$ and the Diophantine properties of a random limit point of $G$ with respect to approximation by the base points of standard horoballs in the $G$-invariant collection $ \{H_{g(p)} \, : \, g \in \mathcal{ T}_p \} $. In particular, with reference to Remark \ref{like}, the statement regarding the normalised $d$-dimensional Lebesgue measure $m$ of the set $W_{y}(\psi_{\ep})$ has the following well-known dynamical corollary: for $m$-almost all directions $v \in S^d$,
\begin{equation} \label{sll}
\limsup_{t \to \infty} \frac{ \pen_{p}(\gamma_v ( t)) }{ \log t } \ = \ \frac{1}{d} \ \ .
\end{equation}
This is Sullivan's famous logarithm law for geodesics \cite{SDS}. The essence of Conjecture~3 is that Sullivan's law survives when we restrict the directions~$v \in S^d$ to appropriate subsets $K$ of $ S^d$. Specifically, Conjecture 3B implies the following ``manifold'' strengthening of Sullivan's logarithm law for geodesics. Let $\cC $ be a nondegenerate analytic curve on the unit (tangent) sphere $S^2$. Then \eqref{sll} holds (with $d=2$) for almost all (with respect to the Lebesgue measure on $\cC$) directions $v \in \cC \subseteq S^2$.

\vspace{7ex}

\noindent{\bf Acknowledgements.}  SV would like to thank the  Tata Institute of Fundamental Research (Mumbai), where the ideas of this work germinated, for its hospitality.  Also during his visit to TIFR, Anish Ghosh introduced him to ``dosa heaven'' in  Malabar Hill where some of the first proofs materialised  -- for this SV is forever in his debt. SV would also like to say a big THANK YOU to Paddy Patterson for the wonderful memories  as his post-doc  in the early nineties and for his inspiring vintage paper \cite{Paddyrs}  that much of this work is based on. Finally, many thanks to Iona and Ayesha for being caring, challenging and well rounded teenagers and the wonderful Bridget for staying with me even after fifty!

We would collectively like to thank the two referees for their detailed reports which have improved the accuracy of the paper.

\vspace*{3ex}


{\footnotesize

}

{\small

\vspace{5mm}

\noindent Victor V. Beresnevich: Department of Mathematics,
University of York,

\vspace{-2mm}

\noindent\phantom{Victor V. Beresnevich: }Heslington, York, YO10
5DD, England.


\noindent\phantom{Victor V. Beresnevich: }e-mail: vb8@york.ac.uk


\vspace{5mm}
\newpage
\noindent Anish Ghosh: School of Mathematics, TIFR

\vspace{-2mm}

\noindent\phantom{Anish Ghosh: }Homi Bhabha Road, Mumbai 400 005, India


\noindent\phantom{Anish Ghosh: }e-mail: ghosh@math.tifr.res.in


\vspace{5mm}

\noindent David S. Simmons: Department of Mathematics,
University of York,

\vspace{-2mm}

\noindent\phantom{David S. Simmons: }Heslington, York, YO10
5DD, England.


\noindent\phantom{David S. Simmons: }e-mail: David.Simmons@york.ac.uk

\noindent\phantom{David S. Simmons: }website: \url{https://sites.google.com/site/davidsimmonsmath/}


\vspace{5mm}

\noindent Sanju L. Velani: Department of Mathematics, University of York,

\vspace{-2mm}

\noindent\phantom{Sanju L. Velani: }Heslington, York, YO10 5DD, England.


\noindent\phantom{Sanju L. Velani: }e-mail: slv3@york.ac.uk

}

\end{document}